\newcommand{\bahao}{\fontsize{6pt}{\baselineskip}\selectfont}
\newcommand{\tabincell}[2]{\begin{tabular}{@{}#1@{}}#2\end{tabular}}
\newtheorem{theorem}{Theorem}[section]
\newtheorem{corollary}[theorem]{Corollary}
\newtheorem{lemma}[theorem]{Lemma}
\newcommand{\ssum} {\sum_{i=1}^{\infty} }
\def\dsum{\displaystyle\sum}
\def\rs{\rm\scriptsize}
\newcommand{\mdef}[1]{\textit{\textbf{#1}}}  
\newcommand{\vsa}{\vskip-12pt}
\newcommand{\vsb}{\vskip-6pt}
\newcommand{\Z}{\mathbb{Z}}
\newcounter{hours}
\newcounter{minutes}
\newcommand{\printtime}{
    \setcounter{hours}{\time/60}%
    \setcounter{minutes}{\time-\value{hours}*60}
    \ifthenelse{\value{hours}<10}{0}{}\thehours:%
    \ifthenelse{\value{minutes}<10}{0}{}\theminutes}
\begin{document}
\title{The thickness of the Kronecker product of graphs}

\author{Xia Guo}
\address{School of Mathematical Sciences, Xiamen University, 361005, Xiamen, China}
\email{guoxia@stu.xmu.edu.cn}
\author{Yan Yang}
\address{School of Mathematics, Tianjin University, 300350, Tianjin, China}
\email{yanyang@tju.edu.cn    (Corresponding author: Yan YANG)}
\thanks{This work was supported by NNSF of China under Grant  No. 11401430}

\begin{abstract}
The thickness of a graph $G$ is the minimum number of planar subgraphs whose union is $G$. In this paper, we present sharp lower and upper bounds for the thickness of the Kronecker product $G\times H$ of two graphs $G$ and $H$. We also give the exact thickness numbers for the Kronecker product
graphs $K_n\times K_2$, $K_{m,n}\times K_2$ and $K_{n,n,n}\times K_2$.
\end{abstract}

\keywords{thickness; Kronecker product graph; planar decomposition.}

\subjclass[2010] {05C10}
\maketitle

\section{Introduction}

\noindent
The {\it thickness} $\theta(G)$ of graph $G$ is the minimum number of planar subgraphs whose union is $G$. It is a measurement of the planarity of a graph, the graph with $\theta(G)=1$ is a planar graph. Since W.T.Tutte\cite{Tut63} inaugurated the thickness problem in 1963, the thickness of some classic types of graphs have been obtained by various authors, such as \cite{AG76,BH65,BHM64,Kl67,Vas76,Yan16} etc. In recent years, some authors focus on the thickness of the graphs which are obtained by operating on two graphs, such as the Cartesian product graph\cite{CYi16,YC17} and join graph\cite{CY17}. In this paper, we are concerned with the Kronecker product graph.

The {\it Kronecker product} (also called as tensor product, direct product, categorical product) $G\times H$ of graphs $G$ and $H$ is the graph   whose vertex set is  $V(G\times H)=V(G)\times V(H)$ and edge set is $E(G\times H)=\{(g,h)(g',h')|gg'\in E(G)~\mbox{and}~hh'\in E(H)\}.$ Figure \ref{figure 7} shows the Kronecker product graph $K_5\times K_2$ in which $\{u_1,\dots,u_5\}$ and $\{v_1,v_2\}$ are the vertex sets of the complete graph $K_5$ and $K_2$, respectively. Many authors did research on various topics of the Kronecker product graph, such as for its planarity\cite{BD09,FW77}, connectivity\cite{WY13}, colouring\cite{DS85,KS96} and application\cite{LC10} etc.

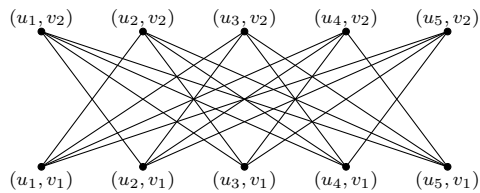
\begin{figure}[H]
\begin{center}
\begin{tikzpicture}
[scale=0.9]
\tikzstyle{every node}=[font=\tiny,scale=0.88]
\tiny
[inner sep=0pt]
\filldraw [black] (0,0) circle (1.4pt)
                  (1.5,0) circle (1.4pt)
                  (3,0) circle (1.4pt)
                  (4.5,0) circle (1.4pt)
                   (6,0) circle (1.4pt);
\filldraw [black] (0,2) circle (1.4pt)
                  (1.5,2) circle (1.4pt)
                  (3,2) circle (1.4pt)
                  (4.5,2) circle (1.4pt)
                   (6,2) circle (1.4pt);
\draw (0,0)--(1.5,2);\draw (0,0)--(3,2);\draw (0,0)--(4.5,2);\draw (0,0)--(6,2);
\draw (1.5,0)--(0,2);\draw (1.5,0)--(3,2);\draw (1.5,0)--(4.5,2);\draw (1.5,0)--(6,2);
\draw (3,0)--(0,2);\draw (3,0)--(1.5,2);\draw (3,0)--(4.5,2);\draw (3,0)--(6,2);
\draw (4.5,0)--(0,2);\draw (4.5,0)--(1.5,2);\draw (4.5,0)--(3,2);\draw (4.5,0)--(6,2);
\draw (6,0)--(0,2);\draw (6,0)--(1.5,2);\draw (6,0)--(3,2);\draw (6,0)--(4.5,2);
\draw (0,-0.2)  node { $(u_1,v_1)$};
\draw (1.5,-0.2)  node { $(u_2,v_1)$};
\draw (3,-0.2)  node { $(u_3,v_1)$};
\draw (4.5,-0.2)  node { $(u_4,v_1)$};
\draw (6,-0.2)  node { $(u_5,v_1)$};
\draw (0,2.2)  node { $(u_1,v_2)$};
\draw (1.5,2.2)  node { $(u_2,v_2)$};
\draw (3,2.2)  node { $(u_3,v_2)$};
\draw (4.5,2.2)  node { $(u_4,v_2)$};
\draw (6,2.2)  node { $(u_5,v_2)$};

\end{tikzpicture}
\caption{The Kronecker product graph $K_5\times K_2$}
\label{figure 7}
\end{center}
\end{figure}

The {\it complete graph} $K_n$ is the graph on $n$ vertices in which any two vertices are adjacent. The {\it complete bipartite graph} $K_{m,n}$ is the graph whose vertex set can be partitioned into two parts $X$ and $Y$, $|X|=m$ and $|Y|=n$, every edge has its ends in different parts and every two vertices in different parts are adjacent. The {\it complete tripartite graph} $K_{l,m,n}$ is defined analogously.

In this paper, we present lower and upper bounds for the thickness of the Kronecker product of two graphs in Section $2$, in which the lower bound comes from Euler's formula and the upper bound is derived from the structure of the Kronecker product graph. Then we study the thickness of the Kronecker product of graph with $K_{2}$. There are two reasons why we interested in it. One reason is that the upper bound for the thickness of the Kronecker product of two graphs we just given rely on that of the Kronecker product of graph with $K_{2}$. Another reason is that the planarity of the Kronecker product of two graphs have been characterized in \cite{FW77}, but graph with $K_{2}$ is one of its missing cases. It's a difficult case, because there exist non-planar graphs whose Kronecker product with $K_2$ are planar graphs, see Figures $1$ and $2$ in \cite{BD09} for example. In Sections $3$ and $4$, we provide the exact thickness numbers for the Kronecker product graphs $K_n\times K_2$, $K_{m,n}\times K_2$ and $K_{n,n,n}\times K_2$.

For undefined terminology, see \cite{BM08}.

\section{Thickness of the Kronecker product graph $G\times H$}

\begin{theorem}\label{2.1} Let $G$ and $H$ be two simple graphs on at least two vertices, then
\begin{equation}\notag
\Big\lceil\frac{2|E(G)| |E(H)|}{3|V(G)| |V(H)|-6}\Big\rceil  \leq
\theta(G\times H)\leq Min\{ \sum\limits_{\substack {i=1 \\  e_i\in E(H)}}^{|E(H)|}  \theta(G\times e_i),
\sum\limits_{\substack {j=1 \\  e_j\in E(G)}}^{|E(G)|}   \theta(H\times e_j)\}.
\end{equation}
\end{theorem}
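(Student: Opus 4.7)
The plan is to get the lower bound from Euler's formula and the upper bound from a natural edge-decomposition of $G\times H$ indexed by the edges of $H$ (or symmetrically of $G$).

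First I would count vertices and edges of $G\times H$. By the definition of the Kronecker product, $|V(G\times H)|=|V(G)|\,|V(H)|$, and each unordered pair of edges $gg'\in E(G)$, $hh'\in E(H)$ contributes exactly two edges in $G\times H$, namely $(g,h)(g',h')$ and $(g,h')(g',h)$, so $|E(G\times H)|=2|E(G)|\,|E(H)|$. Then, since any simple planar graph on $n\geq 3$ vertices has at most $3n-6$ edges, if $\theta(G\times H)=t$ and we decompose $G\times H$ into $t$ planar subgraphs, some subgraph contains at least $|E(G\times H)|/t$ edges, forcing
\[
t \geq \Big\lceil\frac{2|E(G)|\,|E(H)|}{3|V(G)|\,|V(H)|-6}\Big\rceil.
\]
(The hypothesis that $G$ and $H$ both have at least two vertices and at least one edge ensures $|V(G\times H)|\geq 4$, so the Euler bound applies; the degenerate case where either graph has no edges is trivial.)

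For the upper bound, the key observation is that the edge set of $G\times H$ splits as a disjoint union indexed by the edges of $H$: for each $e=hh'\in E(H)$, the subgraph of $G\times H$ whose edges are $\{(g,h)(g',h'):gg'\in E(G)\}\cup\{(g,h')(g',h):gg'\in E(G)\}$ is precisely a copy of $G\times e$ (viewing $e$ as a $K_2$ on $\{h,h'\}$), together with isolated vertices coming from the other copies of $V(G)$. Hence
\[
E(G\times H)=\bigcup_{e_i\in E(H)} E(G\times e_i),
\]
and the union is edge-disjoint. Taking a minimum planar decomposition of each $G\times e_i$ and combining them yields a planar decomposition of $G\times H$ of size $\sum_{e_i\in E(H)}\theta(G\times e_i)$. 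Swapping the roles of $G$ and $H$ gives the symmetric bound, and taking the minimum finishes the proof.

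The only mildly subtle step is confirming that the edge-decomposition above is edge-disjoint and exhausts $E(G\times H)$; everything else is bookkeeping from Euler's formula and the additivity of thickness under edge-disjoint union. No deep obstacle is expected.
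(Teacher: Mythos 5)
Your proposal is correct and follows essentially the same route as the paper: the lower bound via the edge/vertex counts $|E(G\times H)|=2|E(G)||E(H)|$, $|V(G\times H)|=|V(G)||V(H)|$ and Euler's formula, and the upper bound via the edge-disjoint decomposition $G\times H=\bigcup_{e_i\in E(H)}G\times e_i$ (and its symmetric counterpart). Your write-up merely makes explicit the verification of edge-disjointness that the paper leaves implicit.
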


\begin{proof}
It is easy to observe that the number of edges in $G\times H$ is $|E(G\times H)|=2|E(G)||E(H)|$ and the number of vertices in $G\times H$ is $|V(G\times H)|=|V(G)||V(H)|$. From the Euler's Formula, the planar graph with $|V(G)||V(H)|$ vertices, has at most $3|V(G)||V(H)|-6$ edges, the lower bound follows.

From the structure of the Kronecker product graph, we have
$$G\times H=\mathop{\cup}\limits_{\substack {i=1 \\  e_i\in E(H)}}^{|E(H)|} G\times e_i= \mathop{\cup}\limits_{\substack {j=1 \\  e_j\in E(G)}}^{|E(G)|} H\times e_j,$$
the upper bound can be derived easily.
\end{proof}

In the following, we will give examples to show both the lower and upper bound in Theorem \ref{2.1} are sharp. Let $G$ and $H$ be the graphs as shown in Figure \ref{figure 11}(a) and (b) respectively.  Figure \ref{figure 11}(c) illustrates a planar embedding of the graph $G\times \{v_1v_2\}$, in which
we denote the vertex $(u_i, v_j)$ by $u^j_i$, $1\leq i\leq 7$, $1\leq j\leq 2$. So the thickness of $G\times \{v_1v_2\}$ is one which meets the lower bound in Theorem \ref{2.1}. Figure \ref{figure 11}(d) illustrates a planar embedding of the graph $G\times \{v_2v_3\}$ which is isomorphic to $G\times \{v_1v_2\}$. Because $G\times H= G\times \{v_1v_2\}~\cup~G\times \{v_2v_3\}$, we get a planar subgraph decomposition of $G\times H$ with two subgraphs, which shows the thickness of $G\times H$ is not more than two. On the other hand, the graph $G\times H$ contains a subdivision of $K_5$ which are exhibited in Figure \ref{figure 11}(e), so $G\times H$ is not a planar graph, its thickness is greater than one. Therefore, the thickness of $G\times H$ is two which meet the upper bound in  Theorem \ref{2.1}.
\begin{figure}[H]
\begin{center}
\begin{tikzpicture}
[scale=0.85]
\tikzstyle{every node}=[font=\tiny,scale=0.8]
\tiny
[inner sep=0pt]
\filldraw [black] (0,0) circle (1.2pt)
                  (2,0) circle (1.2pt)
                  (-0.7,2) circle (1.2pt)
                  (2.7,2) circle (1.2pt)
                  (1,3.5) circle (1.2pt);
\filldraw [black] (0.1,2.7) circle (1.2pt)
                  (1.9,2.7) circle (1.2pt);
\draw (0,0)--(2,0)--(2.7,2)--(1,3.5)--(-0.7,2)--(0,0);
\draw (0,0)--(1,3.5)--(2,0)--(-0.7,2)--(2.7,2)--(0,0);
\draw (1.25,3.5) node {\tiny $u_1$};
\draw (2.95,2) node {\tiny $u_2$};
\draw (2.25,0) node {\tiny $u_3$};
\draw (-0.25,0) node {\tiny $u_4$};
\draw (-0.95,2) node {\tiny $u_5$};
\draw (2.15,2.7) node {\tiny $u_6$};
\draw (-0.15,2.7) node {\tiny $u_7$};
\begin{scope}[xshift=6cm]
\filldraw [black] (0,1.5) circle (1.2pt)
                  (1.5, 1.5) circle (1.2pt)
                  (3,1.5) circle (1.2pt);
\draw (0,1.5)--(3,1.5);
\draw (0,1.25) node {\tiny $v_1$};
\draw (1.5,1.25) node {\tiny $v_2$};
\draw (3,1.25) node {\tiny $v_3$};
\end{scope}
\end{tikzpicture}
\\ (a) The graph $G$\quad\quad\quad\quad\quad \quad\quad  (b) The graph $H$
\end{center}
\end{figure}

\begin{figure}[H]
\begin{center}
\begin{tikzpicture}
[scale=0.8]
\tikzstyle{every node}=[font=\tiny,scale=0.7]
\tiny
[inner sep=0pt]
\filldraw [black] (0,0) circle (1.2pt)
                  (1,0) circle (1.2pt)
                  (2,0) circle (1.2pt)
                  (3,0) circle (1.2pt)
                  (4,0) circle (1.2pt)
                  (5,0) circle (1.2pt)
                   (6,0) circle (1.2pt);
\filldraw [black] (4,1.5) circle (1.2pt)
                  (4,-1.5) circle (1.2pt);
\filldraw [black] (4,0.7) circle (1.2pt)
                  (4,-0.7) circle (1.2pt);
\filldraw [black] (4,2.5) circle (1.2pt)
                  (4,-2.5) circle (1.2pt);
\filldraw [black] (4,3) circle (1.2pt);
\draw (0,0)--(2,0);\draw (3,0)--(5,0);
\draw (4,1.5)--(2,0);\draw (4,1.5)--(3,0);
\draw (4,1.5)--(4,0);\draw (4,1.5)--(5,0);
\draw (4,2.5)--(0,0);\draw (4,2.5)--(2,0);
\draw (4,2.5)--(5,0);\draw (4,2.5)--(6,0);
\draw (4,-1.5)--(2,0);\draw (4,-1.5)--(3,0);
\draw (4,-1.5)--(4,0);\draw (4,-1.5)--(5,0);
\draw (4,-2.5)--(0,0);\draw (4,-2.5)--(2,0);
\draw (4,-2.5)--(5,0);\draw (4,-2.5)--(6,0);
\draw (0,0)--(4,3)--(6,0);
\draw (0,-0.25) node {\tiny $u_1^2$};
\draw (1,-0.25) node {\tiny $u_6^1$};
\draw (1.9,-0.25) node {\tiny $u_2^2$};
\draw (2.9,-0.25) node {\tiny $u_3^2$};
\draw (3.8,-0.2) node {\tiny $u_1^1$};
\draw (5.1,-0.25) node {\tiny $u_4^2$};
\draw (6.05,-0.25) node {\tiny $u_5^2$};
\draw (3.8,3.1) node {\tiny $u_7^1$};
\draw (3.8,2.63) node {\tiny $u_3^1$};
\draw (4,1.7) node {\tiny $u_5^1$};
\draw (3.8,0.7) node {\tiny $u_7^2$};
\draw (3.8,-2.63) node {\tiny $u_4^1$};
\draw (4,-1.7) node {\tiny $u_2^1$};
\draw (3.8,-0.7) node {\tiny $u_6^2$};
\begin{scope}[xshift=8cm]
\filldraw [black] (0,0) circle (1.2pt)
                  (1,0) circle (1.2pt)
                  (2,0) circle (1.2pt)
                  (3,0) circle (1.2pt)
                  (4,0) circle (1.2pt)
                  (5,0) circle (1.2pt)
                   (6,0) circle (1.2pt);
\filldraw [black] (4,1.5) circle (1.2pt)
                  (4,-1.5) circle (1.2pt);
\filldraw [black] (4,0.7) circle (1.2pt)
                  (4,-0.7) circle (1.2pt);
\filldraw [black] (4,2.5) circle (1.2pt)
                  (4,-2.5) circle (1.2pt);
\filldraw [black] (4,3) circle (1.2pt);
\draw (0,0)--(2,0);\draw (3,0)--(5,0);
\draw (4,1.5)--(2,0);\draw (4,1.5)--(3,0);
\draw (4,1.5)--(4,0);\draw (4,1.5)--(5,0);
\draw (4,2.5)--(0,0);\draw (4,2.5)--(2,0);
\draw (4,2.5)--(5,0);\draw (4,2.5)--(6,0);
\draw (4,-1.5)--(2,0);\draw (4,-1.5)--(3,0);
\draw (4,-1.5)--(4,0);\draw (4,-1.5)--(5,0);
\draw (4,-2.5)--(0,0);\draw (4,-2.5)--(2,0);
\draw (4,-2.5)--(5,0);\draw (4,-2.5)--(6,0);
\draw (0,0)--(4,3)--(6,0);
\draw (0,-0.25) node {\tiny $u_1^2$};
\draw (1,-0.25) node {\tiny $u_6^3$};
\draw (1.9,-0.25) node {\tiny $u_2^2$};
\draw (2.9,-0.25) node {\tiny $u_3^2$};
\draw (3.8,-0.2) node {\tiny $u_1^3$};
\draw (5.1,-0.25) node {\tiny $u_4^2$};
\draw (6.05,-0.25) node {\tiny $u_5^2$};
\draw (3.8,3.1) node {\tiny $u_7^3$};
\draw (3.8,2.63) node {\tiny $u_3^3$};
\draw (4,1.7) node {\tiny $u_5^3$};
\draw (3.8,0.7) node {\tiny $u_7^2$};
\draw (3.8,-2.63) node {\tiny $u_4^3$};
\draw (4,-1.7) node {\tiny $u_2^3$};
\draw (3.8,-0.7) node {\tiny $u_6^2$};
\end{scope}
\end{tikzpicture}
\\ (c) The graph $G\times \{v_1v_2\}$\quad \quad\quad\quad\quad\quad
 (d) The graph $G\times \{v_2v_3\}$
\end{center}
\end{figure}

\begin{figure}[H]
\begin{center}
\begin{tikzpicture}
[scale=1]
\tikzstyle{every node}=[font=\tiny,scale=0.85]
\tiny
[inner sep=0pt]
\filldraw [black] (0,0) circle (1.2pt)
                  (2,0) circle (1.2pt)
                  (-0.7,2) circle (1.2pt)
                  (2.7,2) circle (1.2pt)
                  (1,3.5) circle (1.2pt);
\filldraw [black] (0.1,2.7) circle (1.2pt)
                  (1.9,2.7) circle (1.2pt)
                  (-0.4,1.1) circle (1.2pt)
                  (2.4,1.1) circle (1.2pt)
                  (1,0) circle (1.2pt);
\filldraw [black] (0.45,1.6) circle (1.2pt)
                  (1.55,1.6) circle (1.2pt)
                  (0.65,0.99) circle (1.2pt)
                  (1.35,0.99) circle (1.2pt)
                  (1,2) circle (1.2pt);
\draw (0,0)--(2,0)--(2.7,2)--(1,3.5)--(-0.7,2)--(0,0);
\draw (0,0)--(1,3.5)--(2,0)--(-0.7,2)--(2.7,2)--(0,0);
\draw (1.25,3.5) node {\tiny $u_1^1$};
\draw (2.95,2) node {\tiny $u_2^1$};
\draw (2.25,0) node {\tiny $u_3^1$};
\draw (-0.25,0) node {\tiny $u_4^1$};
\draw (-0.95,2) node {\tiny $u_5^1$};
\draw (2.15,2.75) node {\tiny $u_6^2$};
\draw (-0.15,2.75) node {\tiny $u_7^2$};
\draw (-0.6,1.1) node {\tiny $u_2^2$};
\draw (2.6,1.1) node {\tiny $u_5^2$};
\draw (1,-0.25) node {\tiny $u_1^2$};
\draw (1,2.2) node {\tiny $u_3^2$};
\draw (0.25,1.6) node {\tiny $u_2^3$};
\draw (1.75,1.6) node {\tiny $u_4^3$};
\draw (0.6,0.75) node {\tiny $u_4^2$};
\draw (1.45,0.83) node {\tiny $u_3^3$};
\end{tikzpicture}
\\ (e) A subgraph of $G\times H$
\caption{An example to show both lower and upper bounds in Theorem $2.1$ are sharp}
\label{figure 11}
\end{center}
\end{figure}
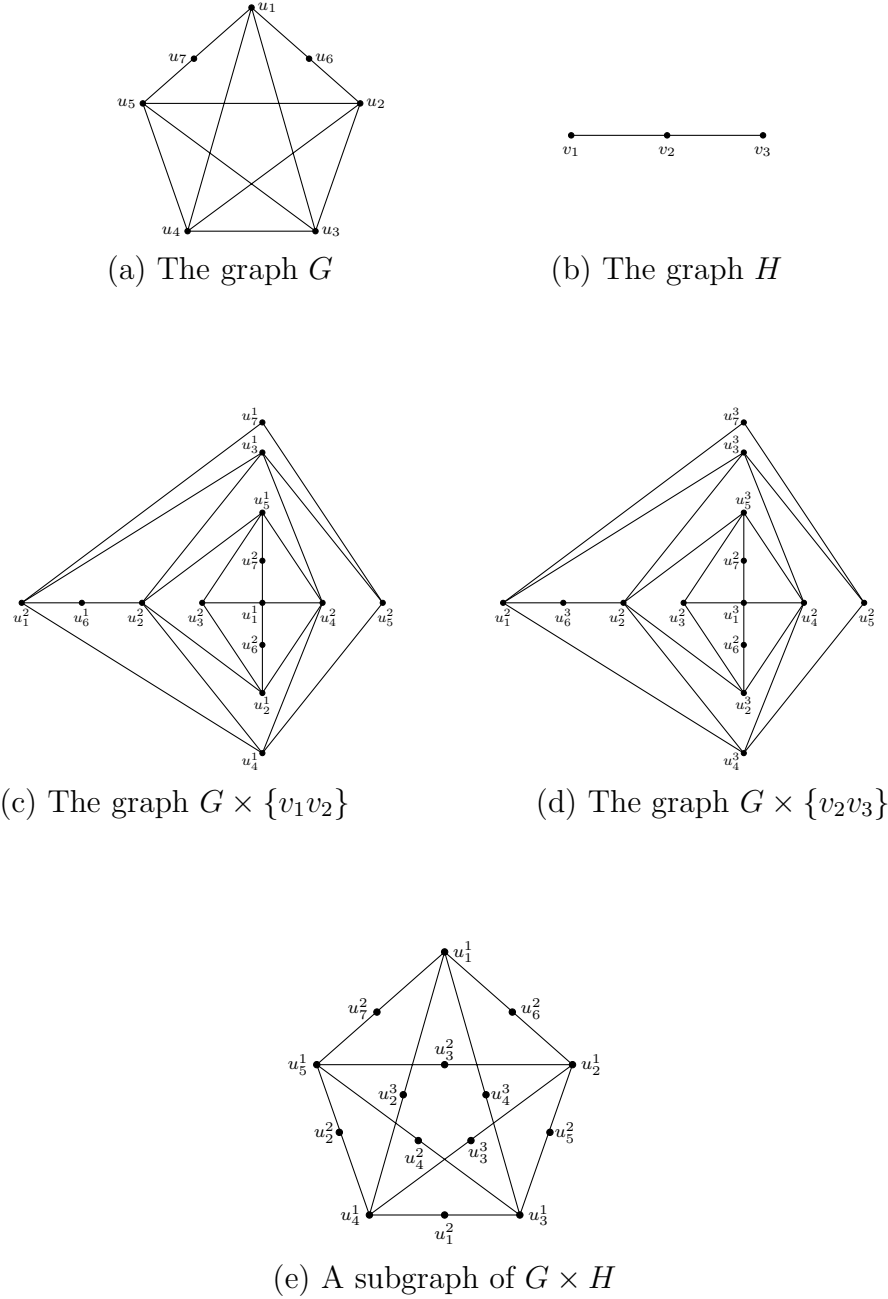

If $G\times H$ does not contain any triangle, from the Euler's Formula, the planar graph with $|V(G)||V(H)|$ vertices, has at most $2|V(G)||V(H)|-4$ edges, a tighter lower bound can be derived.

\begin{theorem}\label{2.2} Let $G$ and $H$ be two simple graphs on at least two vertices. If $G\times H$ does not contain any triangle, then
\begin{equation}\notag
\Big\lceil\frac{|E(G)| |E(H)|}{|V(G)| |V(H)|-2}\Big\rceil  \leq
\theta(G\times H)\leq Min\{ \sum\limits_{\substack {i=1 \\  e_i\in E(H)}}^{|E(H)|}  \theta(G\times e_i),
\sum\limits_{\substack {j=1 \\  e_j\in E(G)}}^{|E(G)|}   \theta(H\times e_j)\}.
\end{equation}
\end{theorem}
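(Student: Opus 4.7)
The plan is to follow exactly the two-pronged approach of Theorem~\ref{2.1}, replacing only the Euler-type edge bound used in the lower half. The upper bound in Theorem~\ref{2.2} is literally identical to the upper bound in Theorem~\ref{2.1}, and its derivation is completely independent of whether $G\times H$ is triangle-free: it rests only on the structural decompositions
$$G\times H \;=\; \bigcup_{e_i\in E(H)} G\times e_i \;=\; \bigcup_{e_j\in E(G)} H\times e_j,$$
each summand of which can in turn be broken into $\theta(G\times e_i)$ (resp.\ $\theta(H\times e_j)$) planar subgraphs. So this half I would simply quote verbatim from the proof of Theorem~\ref{2.1}.

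For the lower bound, I would first record the elementary counts $|V(G\times H)|=|V(G)||V(H)|$ and $|E(G\times H)|=2|E(G)||E(H)|$, as in the proof of Theorem~\ref{2.1}. The key observation now is that every subgraph of a triangle-free graph is itself triangle-free. Hence if
$$G\times H \;=\; P_1\cup P_2\cup\cdots\cup P_t, \qquad t=\theta(G\times H),$$
is a decomposition into planar subgraphs, each $P_i$ is a triangle-free (i.e.\ girth at least $4$) planar graph on at most $|V(G)||V(H)|$ vertices. The standard girth-$4$ strengthening of Euler's formula then gives $|E(P_i)|\le 2|V(G)||V(H)|-4$ for every $i$.

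Summing these inequalities over $i=1,\ldots,t$ and comparing with the edge count of $G\times H$ yields
$$2|E(G)||E(H)| \;=\; \sum_{i=1}^{t} |E(P_i)| \;\leq\; t\bigl(2|V(G)||V(H)|-4\bigr),$$
whence $t\geq |E(G)||E(H)|/(|V(G)||V(H)|-2)$, and taking the ceiling (since $t\in\mathbb{Z}$) delivers the claimed lower bound. There is essentially no obstacle here: the only point requiring a moment's thought is the elementary remark that the triangle-free hypothesis on $G\times H$ passes to each planar piece $P_i$, which is what allows us to invoke the sharper Euler bound $2|V|-4$ rather than $3|V|-6$ on each $P_i$ individually.
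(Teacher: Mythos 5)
Your proposal is correct and matches the paper's intended argument exactly: the paper gives no separate proof of Theorem~\ref{2.2}, remarking only that when $G\times H$ is triangle-free the Euler bound $3|V|-6$ in the proof of Theorem~\ref{2.1} should be replaced by $2|V|-4$, which is precisely what you do (and your explicit note that triangle-freeness passes to each planar piece $P_i$ is the right justification for that replacement).
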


\section{The thickness of $K_n\times K_2$ and $K_{m,n}\times K_2$ }

Let $G$ be a simple graph with $n$ vertices, $V(G)=\{v_1,\ldots,v_n\}$ and $V(K_2)=\{1,2\}$. Then $G\times K_2$ is a bipartite graph, the two vertex parts are $\{(v_i,1)|1\leq i\leq n\}$ and $\{(v_i,2)|1\leq i\leq n\}$, so $G\times K_2$ is a subgraph of $K_{n,n}$ which shows that $\theta(G\times K_2)\leq\theta(K_{n,n})$.

Although the thickness of the complete bipartite $K_{m,n}$ have not been solved completely, when $m=n$, we have the following result.

\begin{lemma}\cite{BHM64}\label{le2} The thickness of the complete bipartite graph $K_{n,n}$ is
$$\theta(K_{n,n})=\big\lceil\frac{n+2}{4}\big\rceil.$$ \end{lemma}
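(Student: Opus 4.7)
My plan is to sandwich $\theta(K_{n,n})$ between matching lower and upper bounds, both equal to $\lceil(n+2)/4\rceil$.

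For the lower bound I would recycle the Euler-type estimate from Theorem~\ref{2.2}: since $K_{n,n}$ is bipartite and therefore triangle-free, every planar subgraph on its $2n$ vertices has at most $2(2n)-4=4n-4$ edges. Combined with $|E(K_{n,n})|=n^2$ this gives
\[
\theta(K_{n,n})\ \ge\ \left\lceil\frac{n^2}{4n-4}\right\rceil.
\]
A short algebraic check, starting from the identity $n^2=(n+2)(n-1)-(n-2)$, shows that $n^2/(4(n-1))=(n+2)/4-\varepsilon$ with $0<\varepsilon<\tfrac14$ for $n\ge 3$, so the ceiling on the right simplifies to $\lceil(n+2)/4\rceil$; the cases $n\in\{1,2\}$ are trivial because $K_{n,n}$ is then planar.

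For the matching upper bound I would construct an explicit planar decomposition of $E(K_{n,n})$ into $\lceil(n+2)/4\rceil$ subgraphs. Since $\theta$ is monotone under subgraph inclusion and $\lceil(n+2)/4\rceil$ is constant on each block of four consecutive integers (the block yielding value $k+1$ is $\{4k-1,4k,4k+1,4k+2\}$), it suffices to handle the largest member of each block, namely $n=4k+2$, and exhibit a decomposition of $K_{4k+2,4k+2}$ into $k+1$ planar subgraphs. Labelling the two parts cyclically and partitioning the edges by the value of $j-i$ modulo $4k+2$, I would group the difference classes into $k+1$ batches, each giving rise to a subgraph that can be drawn as a union of a few nested even cycles together with carefully chosen chords---a configuration that is visibly planar and whose edge count is close to the Euler maximum $4n-4$.

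The main obstacle is producing an explicit crossing-free drawing for each piece, because a counting argument alone cannot establish planarity. My preferred approach is to design one model planar bipartite graph $H$ that is invariant under an appropriate cyclic shift of both parts, draw it once using a concentric-cycle picture with a prescribed chord pattern, and then define each subgraph in the decomposition as a translate of $H$, so that planarity propagates automatically. After verifying that the $k+1$ translates together cover every edge of $K_{4k+2,4k+2}$ (possibly absorbing a handful of residual edges into the last translate), the resulting upper bound $\theta(K_{n,n})\le\lceil(n+2)/4\rceil$ matches the lower bound and completes the proof.
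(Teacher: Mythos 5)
The paper does not prove this statement at all: it imports it as a known result of Beineke, Harary and Moon \cite{BHM64}, so there is no internal proof to compare against. Judged on its own terms, your lower bound is complete and correct: it is exactly the standard Euler-formula argument for triangle-free graphs, and your algebraic reduction of $\big\lceil n^2/(4n-4)\big\rceil$ to $\big\lceil (n+2)/4\big\rceil$ checks out (the fractional part of $(n+2)/4$ is either $0$ or at least $1/4$, while $\varepsilon=(n-2)/(4(n-1))<1/4$, so the ceiling is unaffected). The reduction of the upper bound to the case $n=4k+2$ by monotonicity is also fine.

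The gap is the upper bound construction itself, which is the entire nontrivial content of the theorem and is only described in aspirational terms (``carefully chosen chords,'' ``visibly planar,'' ``possibly absorbing a handful of residual edges''). Worse, the specific plan of grouping the $4k+2$ difference classes of $K_{4k+2,4k+2}$ into $k+1$ batches cannot work as stated: each difference class is a perfect matching with $4k+2$ edges, so a batch of four whole classes has $16k+8$ edges, exceeding the planar bipartite maximum $2(8k+4)-4=16k+4$ on $8k+4$ vertices. Since $4k+2=4(k+1)-2$, any assignment of whole classes to $k+1$ batches forces at least one batch of four classes, hence a non-planar piece. The true decomposition must therefore split difference classes across subgraphs, and specifying how to do that while keeping each piece planar is precisely the delicate construction of \cite{BHM64} (compare the decomposition of $K_{4p,4p}$ reproduced in Figure \ref{figure 2}, where each $G_r$ mixes edges of many difference classes). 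Until that construction is exhibited and its planarity verified, the upper bound, and hence the lemma, is not proved.
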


When $n=4p ~(p \geq 1)$, Chen and Yin gave a planar subgraphs decomposition of $K_{4p,4p}$ with $p+1$ planar subgraphs $G_1,\ldots, G_{p+1}$ in \cite{CYi16}. Denote the two vertex parts of $K_{4p,4p}$ by $U=\{u_1,\ldots, u_{4p}\}$ and $V=\{v_1,\ldots, v_{4p}\}$, Figure \ref{figure 2}  shows their planar subgraphs decomposition of $K_{4p,4p}$, in which for each $G_{r} (1 \leq r\leq p)$, both $v_{4r-3}$ and $v_{4r-1}$ join to each vertex in set $\bigcup\limits^p_{i=1,i\neq r}\{u_{4i-3},u_{4i-2}\}$, both $v_{4r-2}$ and $v_{4r}$ join to each vertex in set $\bigcup\limits^p_{i=1,i\neq r}\{u_{4i-1},u_{4i}\}$,
both $u_{4r-1}$ and $u_{4r}$ join to each vertex in set $\bigcup\limits^p_{i=1,i\neq r}\{v_{4i-3},v_{4i-1}\}$, and both $u_{4r-3}$ and $u_{4r-2}$
join to each vertex in set $\bigcup\limits^p_{i=1,i\neq r}\{v_{4i-2},v_{4i}\}$. Notice that $G_{p+1}$ is a perfect matching of $K_{4p,4p}$, the edge set of it is $\{u_iv_i|1\leq i\leq 4p\}$.

\begin{figure}[H]
\begin{center}
\begin{tikzpicture}                                                         
[inner sep=0pt]
\filldraw [black] (0,2.1) circle (1.4pt)
                  (2.1,0) circle (1.4pt)
                  (2.1,4.2) circle (1.4pt)
                  (4.2,2.1) circle (1.4pt);
\filldraw [black]     (1.4,2.1) circle (1.4pt)
                      (2.8,2.1) circle (1.4pt)
                      (2.1,1.4) circle (1.4pt)
                      (2.1,2.8) circle (1.4pt);
\filldraw [black] (0.7,2.5) circle (1.4pt)
                  (0.7,1.7) circle (1.4pt);
\filldraw [black]      (3.5,2.5) circle (1.4pt)
                       (3.5,1.7) circle (1.4pt);
\filldraw [black] (2.5,0.7) circle (1.4pt)
                  (1.7,0.7) circle (1.4pt);
\filldraw [black]      (2.5,3.5) circle (1.4pt)
                       (1.7,3.5) circle (1.4pt);
\filldraw [black] (0.7,2.2) circle (0.7pt)
                  (0.7,2.05) circle (0.7pt)
                  (0.7,1.9) circle (0.7pt);
\filldraw [black]      (3.5,2.2) circle (0.7pt)
                       (3.5,2.05) circle (0.7pt)
                       (3.5,1.9) circle (0.7pt);
\filldraw [black] (2.2,0.7) circle (0.7pt)
                  (2.05,0.7) circle (0.7pt)
                  (1.9,0.7) circle (0.7pt);
\filldraw [black] (2.2,3.5) circle (0.7pt)
                  (2.05,3.5) circle (0.7pt)
                  (1.9,3.5) circle (0.7pt);
\draw (0,2.1)--(2.1,0);\draw (2.1,0)--(4.2,2.1);\draw (2.1,4.2)--(0,2.1);\draw ((4.2,2.1)--(2.1,4.2);
\draw (1.4,2.1)--(2.1,1.4);\draw (2.1,1.4)--(2.8,2.1);\draw (2.8,2.1)--(2.1,2.8);\draw (1.4,2.1)--(2.1,2.8);
\draw (0,2.1)--(0.7,2.5);\draw (0,2.1)--(0.7,1.7);
\draw (1.4,2.1)--(0.7,2.5);\draw (1.4,2.1)--(0.7,1.7);
\draw (2.8,2.1)--(3.5,2.5);\draw (2.8,2.1)--(3.5,1.7) ;
\draw (4.2,2.1)--(3.5,2.5);\draw (4.2,2.1)--(3.5,1.7) ;
\draw (2.1,0)--(2.5,0.7);\draw (2.1,0)--(1.7,0.7);
\draw (2.1,1.4)--(2.5,0.7);\draw (2.1,1.4)--(1.7,0.7);
\draw (2.1,2.8)--(2.5,3.5);\draw (2.1,2.8)--(1.7,3.5);
\draw (2.1,4.2)--(2.5,3.5);\draw (2.1,4.2)--(1.7,3.5);
\draw (2.1,-0.2) node {\tiny $v_{4r}$}
      (2.1,4.4) node {\tiny $v_{4r-3}$}
      (-0.26,2.3) node {\tiny $u_{4r-1}$}
      (4.5,2.25) node {\tiny $u_{4r-2}$};
\draw    (2.55,1.35) node {\tiny $v_{4r-2}$}
          (1.68,2.83) node {\tiny $v_{4r-1}$}
           (1.4,1.85) node {\tiny $u_{4r}$}
             (2.35,2.1) node {\tiny $u_{4r-3}$};
\draw[-](2.1,4.2)..controls+(-1.2,-1.2)and+(-0.1,0.5)..(1.4,2.1);
\draw[-](0,2.1)..controls+(1.2,-1.2)and+(-0.5,-0.1)..(2.1,1.4);
\draw[-](2.1,0)..controls+(1.2,1.2)and+(0.1,-0.5)..(2.8,2.1);
\draw[-](4.2,2.1)..controls+(-1.2,1.2)and+(0.5,0.1)..(2.1,2.8);
\node(5) at(2.1,2.45)[circle,draw]{\scriptsize $1$};
\node(4) at(2.8,2.6)[circle,draw]{\scriptsize $2$};
\node(2) at(1,0.3)[circle,draw]{\scriptsize $3$};
\draw[-](0.7,2) to (-0.5,1.4);\draw (-1.2,1.2) node {\tiny $\bigcup\limits^p_{i=1,i\neq r}\{v_{4i-3},v_{4i-1}\}$};
\draw[-](3.6,2.1) to (4.2,3.2);\draw (4.8,3.5) node {\tiny $\bigcup\limits^p_{i=1,i\neq r}\{v_{4i-2},v_{4i}\}$};
\draw[-](2.00,0.8) to (3.5,1);\draw (5.00,1.0) node {\tiny $\bigcup\limits^p_{i=1,i\neq r}\{u_{4i-1},u_{4i}\}$};
\draw[-](2.05,3.5) to (0.7,3.9);\draw (-0.2,4.03) node {\tiny $\bigcup\limits^p_{i=1,i\neq r}\{u_{4i-3},u_{4i-2}\}$};
\end{tikzpicture}
\\ (a) The graph $G_r\ \ (1\leq r\leq p)$
\end{center}
\end{figure}
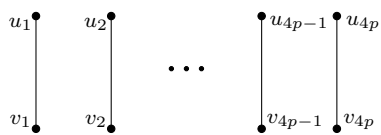
\begin{figure}[H]
\begin{center}
\begin{tikzpicture}
[xscale=1]
\tikzstyle{every node}=[font=\tiny,scale=1]
[inner sep=0pt]\tiny

\filldraw [black] (0,1) circle (1.4pt)
                   (0,-0.5) circle (1.4pt)
                  (1,1) circle (1.4pt)
                   (1,-0.5) circle (1.4pt)
                   (3,1) circle (1.4pt)
                   (3,-0.5) circle (1.4pt)
                  (4,1) circle (1.4pt)
                  (4,-0.5) circle (1.4pt);
\draw(4,1)-- (4,-0.5);\draw(3,1)-- (3,-0.5);
\draw(1,1)-- (1,-0.5);\draw(0,1)-- (0,-0.5);
\filldraw [black] (1.8,0.3) circle (0.8pt)
                               (2,0.3) circle (0.8pt)
                               (2.2,0.3) circle (0.8pt);
\draw (-0.2,0.9) node {\tiny $u_{1}$}
      (-0.2,-0.4) node {\tiny $v_{1}$}
      (0.8,0.9) node {\tiny $u_{2}$}
      (0.8,-0.4) node {\tiny $v_{2}$};
\draw (3. 5,0.9) node {\tiny $u_{4p-1}$}
      (3.45,-0.4) node {\tiny $v_{4p-1}$}
      (4.35,0.9) node {\tiny $u_{4p}$}
      (4.3,-0.4) node {\tiny $v_{4p}$};
\end{tikzpicture}
\\ (b) The graph $G_{p+1}$
\caption{A planar decomposition of $K_{4p,4p}$}
\label{figure 2}
\end{center}
\end{figure}

\begin{lemma}\cite{CYi16}\label{l3.2} Suppose $K_{n,n}$ is a complete bipartite graph with two vertex parts $U=\{u_1,\ldots, u_{n}\}$ and $V=\{v_1,\ldots, v_{n}\}$. When $n=4p$, there exists a planar subgraphs decomposition of $K_{4p,4p}$ with $p+1$ planar subgraphs $G_1,\ldots, G_{p+1}$ in which $G_{p+1}$ is a perfect matching of $K_{4p,4p}$ with edge set $\{u_iv_i|1\leq i\leq 4p\}$.\end{lemma}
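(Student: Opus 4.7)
The plan is to verify directly that the family $G_1,\ldots,G_{p+1}$ described in the paragraph preceding the lemma (and drawn in Figure \ref{figure 2}) is indeed a planar decomposition of $K_{4p,4p}$. Three facts must be checked: (i) each $G_r$ with $1\le r\le p$ is planar; (ii) the edge sets $E(G_1),\ldots,E(G_{p+1})$ partition $E(K_{4p,4p})$; and (iii) $G_{p+1}$ is the claimed matching $\{u_iv_i:1\le i\le 4p\}$. Point (iii) is immediate from the stated definition of $G_{p+1}$.

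For planarity, I would simply adopt the embedding shown in Figure \ref{figure 2}(a): the eight ``central'' vertices of $G_r$, namely $u_{4r-3},\ldots,u_{4r},v_{4r-3},\ldots,v_{4r}$, are drawn in an octahedron-like pattern with an outer $4$-cycle, an inner $4$-cycle, and four connecting arcs, while the remaining $8(p-1)$ non-central vertices are grouped into four ``pockets,'' each placed inside a distinct face of the central drawing. Each pocket together with its two hub central vertices forms a planar $K_{2,2(p-1)}$, which can be drawn inside the available face without crossings, so $G_r$ admits a planar embedding and (i) follows.

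The main obstacle is the partition check (ii), which requires careful bookkeeping but is entirely elementary. Group the vertices into blocks $B_r=\{u_{4r-3},\ldots,u_{4r},v_{4r-3},\ldots,v_{4r}\}$ for $1\le r\le p$, and classify each edge of $K_{4p,4p}$ by the blocks of its endpoints. For $r\ne s$, a direct unpacking of the four bullets in the recipe shows that $G_r$ prescribes exactly $16$ of the $32$ edges between $B_r$ and $B_s$; the symmetric recipe for $G_s$ prescribes the complementary $16$, and one checks that the $v$-vertices of $B_r$ are paired with $\{u_{4s-3},u_{4s-2}\}$ or $\{u_{4s-1},u_{4s}\}$ in $G_r$ exactly opposite to how they appear in $G_s$, so no edge is duplicated or missed. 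For $r=s$, the central drawing of $G_r$ contains $12$ of the $16$ edges of the induced $K_{4,4}$ on $B_r$; a direct enumeration identifies the four missing ones as the diagonal edges $u_{4r-k}v_{4r-k}$ for $k\in\{0,1,2,3\}$, each of which is supplied by $G_{p+1}$. A final edge count $16p^2=p(16p-4)+4p=\sum_{r=1}^{p+1}|E(G_r)|$ confirms that the decomposition is exact.
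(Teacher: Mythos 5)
Your proposal is correct and verifies exactly the decomposition that the paper takes from Chen and Yin \cite{CYi16} and displays in Figure \ref{figure 2}; the paper itself offers no proof of Lemma \ref{l3.2}, relying entirely on the citation and the picture, so you are supplying the verification it omits rather than taking a different route. Your bookkeeping is accurate: the complementary pairing of cross-block edges between $G_r$ and $G_s$, the identification of the four edges missing from the central $K_{4,4}$ of each block as precisely $u_iv_i$ ($i=4r-3,\dots,4r$), and the closing count $p(16p-4)+4p=16p^2$ all check out.
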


\begin{theorem}\label{th1} The thickness of the Kronecker product of $K_n$ and $K_2$ is
$$\theta(K_{n}\times K_2)=\big\lceil\frac{n}{4}\big\rceil.$$ \end{theorem}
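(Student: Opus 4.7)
The plan is to match the triangle-free Euler lower bound of Theorem~\ref{2.2} with an upper bound harvested from the planar decomposition of $K_{4p,4p}$ supplied by Lemma~\ref{l3.2}. The structural observation I will use throughout is that, under the natural identification $(u_i,1)\leftrightarrow u_i\in U$ and $(u_i,2)\leftrightarrow v_i\in V$, the graph $K_n\times K_2$ is exactly $K_{n,n}$ with the perfect matching $M=\{u_iv_i:1\leq i\leq n\}$ deleted; in particular $K_n\times K_2$ is bipartite and hence triangle-free.

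For the lower bound, I would apply Theorem~\ref{2.2} directly with $G=K_n$ and $H=K_2$, obtaining
$$\theta(K_n\times K_2)\ \geq\ \Big\lceil\frac{\binom{n}{2}\cdot 1}{n\cdot 2-2}\Big\rceil\ =\ \Big\lceil\frac{n}{4}\Big\rceil,$$
the last equality being a short case-check over $n\bmod 4$.

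For the upper bound, I would first handle the divisible case $n=4p$. Lemma~\ref{l3.2} furnishes a planar decomposition $G_1,\ldots,G_{p+1}$ of $K_{4p,4p}$ in which $G_{p+1}$ is precisely the matching $\{u_iv_i:1\leq i\leq 4p\}$. Under the identification above, this $G_{p+1}$ is exactly the matching $M$ that must be removed to pass from $K_{n,n}$ to $K_n\times K_2$, so the remaining $G_1,\ldots,G_p$ already form a planar decomposition of $K_n\times K_2$ into $p=\lceil n/4\rceil$ pieces. For $n=4p+r$ with $r\in\{1,2,3\}$, for which $\lceil n/4\rceil=p+1$, I would reduce to the divisible case by noting that $K_n\times K_2$ is a subgraph of $K_{4(p+1)}\times K_2$; restricting the $(p+1)$-subgraph decomposition of the latter yields a planar decomposition of $K_n\times K_2$ with at most $p+1$ pieces.

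The only subtle point is the alignment at the end of the divisible case: one needs the matching singled out by Lemma~\ref{l3.2} to coincide with the matching one must delete. Since both matchings are indexed by the canonical bijection $u_i\leftrightarrow v_i$ between the two colour classes, this is only a question of choosing labels consistently and so presents no genuine obstacle. In summary, the whole argument reduces to two ingredients already in hand, the Euler-type bound of Theorem~\ref{2.2} and the matching-containing decomposition of Lemma~\ref{l3.2}, plus the elementary observation that $K_n\times K_2=K_{n,n}\setminus M$.
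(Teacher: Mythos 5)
Your proposal is correct and follows essentially the same route as the paper: the lower bound comes from the triangle-free Euler bound of Theorem~\ref{2.2}, and the upper bound from the observation that $K_n\times K_2$ is $K_{n,n}$ minus exactly the perfect matching $G_{p+1}$ singled out in Lemma~\ref{l3.2}, so that $G_1,\dots,G_p$ already decompose $K_{4p}\times K_2$. The only divergence is that you treat all three remainder classes $n=4p+1,4p+2,4p+3$ uniformly by embedding into $K_{4(p+1)}\times K_2$, whereas the paper constructs an explicit extra planar subgraph $G'_{p+1}$ for $n=4p+2$ before applying the same subgraph-monotonicity argument to $n=4p+1$ and $n=4p+3$; your uniform reduction is valid and slightly more economical, since $\lceil n/4\rceil=p+1$ in all three cases.
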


\begin{proof}
Suppose that the vertex sets of $K_n$ and $K_2$ are $\{x_1,\dots,x_n\}$ and $\{1,2\}$ respectively. The graph $K_n\times K_2$ is a bipartite graph whose two vertex parts are $\{(x_i,1)|1\leq i\leq n\}$ and $\{(x_i,2)|1\leq i\leq n\}$, and edge set is $\{(x_i,1)(x_j,2)|1\leq i,j\leq n, i\neq j \}$. For $1\leq i\leq n$, $1\leq k\leq 2$, we denote the vertex $(x_i,k)$ of $K_n\times K_2$ by $x_i^k$ for simplicity.

Since $|E(K_{n}\times K_2)|=n(n-1)$ and $|V(K_{n}\times K_2)|=2n$, from Theorem \ref{2.2}, we have
$$\theta(K_n\times K_2)\geq \big\lceil\frac{n(n-1)}{4n-4}\big\rceil=\big\lceil\frac{n}{4}\big\rceil.\eqno(1)$$

In the following, we will construct planar decompositions of $K_{n}\times K_2$ with $\big\lceil\frac{n}{4}\big\rceil$ subgraphs to complete the proof.

{\bf Case 1.} When $n=4p$.
\\Suppose that $K_{n,n}$ is a complete bipartite graph with vertex partition  $(X^1,X^2)$ in which $X^1=\{x_1^1,\dots,x_n^1\}$ and $X^2=\{x_1^2,\dots,x_n^2\}$. The graph $G_{p+1}$ is a perfect matching of $K_{4p,4p}$ whose edge set is $\{x_{i}^1x_{i}^2|1\leq i\leq n\}$, then
$K_{n}\times K_2=K_{n,n}-G_{p+1}$. From Lemma \ref{l3.2}, there exists a planar decomposition $\{G_1,\ldots, G_{p}\}$ of $K_n\times K_2$ in which $G_{r}(1\leq r\leq p)$ is isomorphic to the graph in Figure \ref{figure 2}(a). Therefore, $\theta(K_{4p}\times K_2)\leq p$.

{\bf Case 2.} When $n=4p+2$.
\\When $p\geq 1$, we draw a graph $G'_{p+1}$ as shown in Figure \ref{figure 1}, then
$\{G_1,\ldots, G_{p}, G'_{p+1}\}$ is a planar decomposition of $K_{4p+2}\times K_2$ with $p+1$ subgraphs, so we have $\theta(K_{4p+2}\times K_2)\leq p+1$. When $n=2$, $K_2\times K_2=2K_2$ is a planar graph.

\begin{figure}[H]
\begin{center}
\begin{tikzpicture}
[xscale=1]
\tikzstyle{every node}=[font=\tiny,scale=0.8]
[inner sep=0pt]\tiny
\filldraw [black] (2,0) circle (0.8pt)
                  (2.3,0) circle (0.8pt)
                  (1.7,0) circle (0.8pt);
\filldraw [black] (7,0) circle (0.8pt)
                  (7.3,0) circle (0.8pt)
                  (6.7,0) circle (0.8pt);
\filldraw [black] (0,0) circle (1.4pt)
                   (1,0) circle (1.4pt)
                  (3,0) circle (1.4pt)
                   (4,0) circle (1.4pt)
                   (5,0) circle (1.4pt)
                   (6,0) circle (1.4pt)
                  (8,0) circle (1.4pt)
                  (9,0) circle (1.4pt);
\filldraw [black] (2,1) circle (1.4pt)
                   (2,-1) circle (1.4pt)
                  (7,1) circle (1.4pt)
                   (7,-1) circle (1.4pt);
\draw(2,1)--(0,0);\draw(2,1)--(1,0);\draw(2,1)--(3,0);\draw(2,1)--(4,0);
\draw(2,-1)--(0,0);\draw(2,-1)--(1,0);\draw(2,-1)--(3,0);\draw(2,-1)--(4,0); \draw(7,1)--(5,0);\draw(7,1)--(6,0);\draw(7,1)--(8,0);\draw(7,1)--(9,0);
\draw(7,-1)--(5,0);\draw(7,-1)--(6,0);\draw(7,-1)--(8,0);\draw(7,-1)--(9,0);
\draw(2,1)--(7,1);\draw(2,-1)--(7,-1);
\draw (2,1.2) node {\tiny $x_{4p+1}^1$}
      (7.25,1.2) node {\tiny $x_{4p+2}^2$}
      (2,-1.2) node {\tiny $x_{4p+2}^1$}
      (7.25,-1.2) node {\tiny $x_{4p+1}^2$};
\draw (0.3,0) node {\tiny $x_{1}^2$}
      (1.3,0) node {\tiny $x_{2}^2$}
      (3.4,0) node {\tiny $x_{4p-1}^2$}
      (4.3,0) node {\tiny $x_{4p}^2$};
\draw (5.3,0) node {\tiny $x_{1}^1$}
      (6.3,0) node {\tiny $x_{2}^1$}
      (8.4,0) node {\tiny $x_{4p-1}^1$}
      (9.3,0) node {\tiny $x_{4p}^1$};
\end{tikzpicture}
\caption{The graph $G'_{p+1}$}
\label{figure 1}
\end{center}
\end{figure}
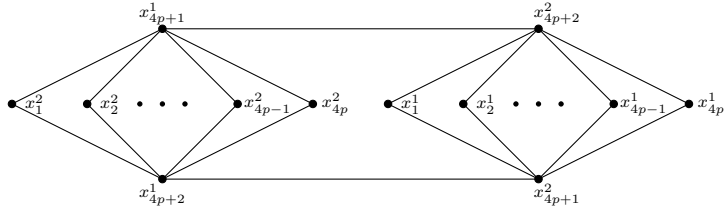

{\bf Case 3.} When $n=4p+1$ and $n=4p+3$.
\\Because $K_{4p+1}\times K_2$ is a subgraph of  $K_{4p+2}\times K_2$, we have $\theta(K_{4p+1}\times K_2)\leq \theta(K_{4p+2}\times K_2)=p+1$. Similarly, when $n=4p+3$, we have $\theta(K_{4p+3}\times K_2)\leq \theta(K_{4(p+1)}\times K_2)=p+1$.
\\Summarizing Cases $1$, $2$ and $3$, we have  $$\theta(K_n\times K_2)\leq \big\lceil\frac{n}{4}\big\rceil.\eqno(2)$$

Theorem follows from inequalities $(1)$ and $(2)$.
\end{proof}

\begin{theorem}\label{th3.4} Let $G$ be a simple graph on $n (n\geq 2)$ vertices, then \begin{equation}\notag
\big\lceil\frac{E(G)}{2n-2}\big\rceil  \leq
\theta(G\times K_2)\leq \big\lceil\frac{n}{4}\big\rceil.
\end{equation} \end{theorem}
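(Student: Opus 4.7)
The plan is to derive both inequalities as immediate consequences of results already established in the paper; no new construction is required.

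For the lower bound, the first step is to observe that the Kronecker product of any graph with $K_2$ is bipartite. Indeed, the sets $\{(v,1):v\in V(G)\}$ and $\{(v,2):v\in V(G)\}$ partition $V(G\times K_2)$, and by the definition of the Kronecker product every edge joins a vertex with second coordinate $1$ to one with second coordinate $2$. In particular $G\times K_2$ contains no triangle, so Theorem \ref{2.2} applies. Substituting $H=K_2$, for which $|E(K_2)|=1$ and $|V(K_2)|=2$, the lower bound in Theorem \ref{2.2} reduces to exactly $\lceil |E(G)|/(2n-2)\rceil$.

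For the upper bound, I would invoke monotonicity of thickness together with Theorem \ref{th1}. Since $G$ is a simple graph on $n$ vertices, $G$ is a subgraph of $K_n$, and from the definition of the Kronecker product this gives $G\times K_2\subseteq K_n\times K_2$. Any planar decomposition of $K_n\times K_2$ restricts to a planar decomposition of the subgraph $G\times K_2$, so
$$\theta(G\times K_2)\leq \theta(K_n\times K_2)=\Big\lceil\frac{n}{4}\Big\rceil,$$
where the equality is Theorem \ref{th1}.

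There is essentially no obstacle here; the theorem is a direct corollary of Theorems \ref{2.2} and \ref{th1}. The only points to verify are the two elementary observations above, namely that $G\times K_2$ is bipartite (hence triangle-free, so that Theorem \ref{2.2} is applicable) and that the Kronecker product preserves the subgraph relation in the first coordinate. Both follow immediately from the definition of the Kronecker product given in the introduction.
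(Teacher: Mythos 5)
Your proposal is correct and follows exactly the paper's argument: the lower bound comes from Theorem \ref{2.2} applied with $H=K_2$ (valid since $G\times K_2$ is bipartite, hence triangle-free), and the upper bound from $G\times K_2\subseteq K_n\times K_2$ together with Theorem \ref{th1}. You merely spell out the two elementary verifications that the paper leaves implicit.
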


\begin{proof} Because $G\times K_2$ is a subgraph of $K_{n}\times K_2$, we have $\theta(G\times K_2)\leq \theta(K_{n}\times K_2)$. Combining it with Theorems \ref{2.2} and \ref{th1}, the theorem follows.
\end{proof}

\begin{lemma}\cite{FW77}\label{le1} $K_{m,n}\times K_{p,q}=K_{mp,nq}\cup K_{mq,np}.$\end{lemma}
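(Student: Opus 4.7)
The plan is to partition the vertex set of $K_{m,n}\times K_{p,q}$ into blocks induced by the two bipartitions and then read off which blocks are joined by edges.

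Let $(A,B)$ be the bipartition of $K_{m,n}$ with $|A|=m$, $|B|=n$, and let $(C,D)$ be the bipartition of $K_{p,q}$ with $|C|=p$, $|D|=q$. Then $V(K_{m,n}\times K_{p,q})=(A\cup B)\times(C\cup D)$ splits into the four pairwise disjoint blocks
\[
A\times C,\quad A\times D,\quad B\times C,\quad B\times D,
\]
of sizes $mp$, $mq$, $np$, $nq$ respectively.

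Next I would use the definition of the Kronecker product to determine which pairs of blocks carry edges. An edge $(u,v)(u',v')$ requires $uu'\in E(K_{m,n})$ and $vv'\in E(K_{p,q})$, i.e.\ $u,u'$ lie on opposite sides of $(A,B)$ and $v,v'$ lie on opposite sides of $(C,D)$. Therefore the endpoints of any edge lie in exactly one of two configurations: one endpoint in $A\times C$ and the other in $B\times D$, or one endpoint in $A\times D$ and the other in $B\times C$. No edges run inside a single block (the bipartiteness of each factor kills those), and no edges run between $A\times C$ and $A\times D$, between $B\times C$ and $B\times D$, between $A\times C$ and $B\times C$, or between $A\times D$ and $B\times D$ (since one of the two coordinate conditions fails in each of these cases).

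Finally, because $K_{m,n}$ contains every edge between $A$ and $B$ and $K_{p,q}$ contains every edge between $C$ and $D$, every candidate pair in the two surviving configurations is actually adjacent. Hence the subgraph on $(A\times C)\cup(B\times D)$ is a complete bipartite graph $K_{mp,nq}$, and the subgraph on $(A\times D)\cup(B\times C)$ is a complete bipartite graph $K_{mq,np}$, yielding $K_{m,n}\times K_{p,q}=K_{mp,nq}\cup K_{mq,np}$. There is no real obstacle here; the argument is pure bookkeeping, and the only point that deserves explicit mention is that the four blocks are pairwise disjoint, so the two $K$-pieces are vertex-disjoint (not merely edge-disjoint), which is what allows the conclusion to be written as a union of graphs on disjoint vertex sets.
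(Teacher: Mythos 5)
Your proof is correct. The paper does not prove this lemma at all --- it is quoted directly from Farzan and Waller \cite{FW77} --- so there is no internal argument to compare against; your block decomposition of $(A\cup B)\times(C\cup D)$ into the four sets $A\times C$, $A\times D$, $B\times C$, $B\times D$ and the edge bookkeeping via the definition of the Kronecker product is exactly the standard verification one would expect, and your closing remark that the two complete bipartite pieces are vertex-disjoint (so the union is a disjoint union of graphs) is the right point to make explicit.
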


\begin{theorem}\label{t} The thickness of the Kronecker product of $K_{m,n}$ and $K_{p,q}$ is
$$\theta(K_{m,n}\times K_{p,q})=Max\{\theta(K_{mp,nq}), \theta(K_{mq,np})\}.$$ \end{theorem}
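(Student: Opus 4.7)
The plan is to combine Lemma \ref{le1} with the elementary observation that, for two graphs on disjoint vertex sets, the thickness of their union is the maximum of the two thicknesses. The first step is therefore to examine the decomposition in Lemma \ref{le1} carefully and check that the two summands are in fact vertex-disjoint. If the bipartitions of $K_{m,n}$ and $K_{p,q}$ are $(A,B)$ and $(C,D)$ with $|A|=m,|B|=n,|C|=p,|D|=q$, then an edge $(u,v)(u',v')$ in $K_{m,n}\times K_{p,q}$ forces $\{u,u'\}$ to meet both $A$ and $B$ and $\{v,v'\}$ to meet both $C$ and $D$. Consequently every edge either lies between $A\times C$ and $B\times D$ (giving a copy of $K_{mp,nq}$) or between $A\times D$ and $B\times C$ (giving a copy of $K_{mq,np}$), and these four vertex blocks are pairwise disjoint. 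Thus the ``$\cup$'' in Lemma \ref{le1} is really a disjoint union.

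Next I would establish that for any two graphs $G_1,G_2$ on disjoint vertex sets, $\theta(G_1\cup G_2)=\max\{\theta(G_1),\theta(G_2)\}$. The lower bound is immediate since $G_1$ and $G_2$ are subgraphs of $G_1\cup G_2$, so $\theta(G_1\cup G_2)\ge\theta(G_i)$ for $i=1,2$. For the upper bound, take optimal planar decompositions $\{H_1,\dots,H_s\}$ and $\{H_1',\dots,H_t'\}$ of $G_1$ and $G_2$, where $s=\theta(G_1)$ and $t=\theta(G_2)$. Assume $s\le t$ and pad with empty graphs so that both lists have $t$ terms; then $\{H_i\cup H_i'\}_{i=1}^{t}$ is a planar decomposition of $G_1\cup G_2$, because planar embeddings of $H_i$ and $H_i'$ can be placed in disjoint regions of the plane.

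Applying this principle to $G_1=K_{mp,nq}$ and $G_2=K_{mq,np}$, together with Lemma \ref{le1}, yields exactly the claimed identity
\[
\theta(K_{m,n}\times K_{p,q})=\max\{\theta(K_{mp,nq}),\theta(K_{mq,np})\}.
\]
There is essentially no obstacle here: once one notices that the two complete bipartite summands sit on disjoint parts of $V(K_{m,n})\times V(K_{p,q})$, the theorem reduces to the trivial behavior of thickness under disjoint union. The only small care needed is to record the vertex-disjointness explicitly, since otherwise one might worry about interaction between edges of $K_{mp,nq}$ and $K_{mq,np}$ when assembling planar layers.
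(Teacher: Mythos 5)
Your proposal is correct and follows exactly the route the paper intends: the paper's proof consists of citing Lemma \ref{le1} and calling the rest straightforward, and the "straightforward" part is precisely the vertex-disjointness of the two complete bipartite summands plus the fact that thickness of a disjoint union is the maximum of the thicknesses, both of which you verify explicitly.
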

\begin{proof} From Lemma \ref{le1}, the proof is straightforward.
\end{proof}

Because $K_2$ is also $K_{1,1}$, the following corollaries are easy to get, from Theorem \ref{t} and Lemma \ref{le2}.

\begin{corollary}\label{Cor3} The thickness of the Kronecker product of $K_{m,n}$ and $K_{2}$ is
$$\theta(K_{m,n}\times K_{2})=\theta(K_{m,n}).$$ \end{corollary}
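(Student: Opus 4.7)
The plan is to observe that $K_2$ coincides with the complete bipartite graph $K_{1,1}$, and then specialize Theorem \ref{t} to the case $p=q=1$. Under this substitution the two arguments of the maximum on the right-hand side both collapse to $\theta(K_{m,n})$, since $mp=mq=m$ and $nq=np=n$. This immediately yields
$$\theta(K_{m,n}\times K_2)=\theta(K_{m,n}\times K_{1,1})=\max\{\theta(K_{m,n}),\theta(K_{m,n})\}=\theta(K_{m,n}),$$
which is the claim.

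At the structural level the identity is transparent from Lemma \ref{le1}: setting $p=q=1$ gives $K_{m,n}\times K_{1,1}=K_{m,n}\cup K_{m,n}$, i.e.\ a vertex-disjoint union of two copies of $K_{m,n}$. An optimal planar decomposition of one copy of $K_{m,n}$ into $\theta(K_{m,n})$ pieces can be mirrored on the second copy, drawn in a disjoint region of the plane, to give a planar decomposition of the disjoint union of the same size; the reverse inequality is immediate since each copy of $K_{m,n}$ is a subgraph of $K_{m,n}\times K_2$. Lemma \ref{le2} is not strictly needed to prove this particular corollary; it enters the companion statement about $K_{n,n}\times K_2$, where the closed form $\lceil(n+2)/4\rceil$ becomes available.

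There is essentially no obstacle to overcome: all the work has already been done in Lemma \ref{le1} and Theorem \ref{t}, and the corollary is a one-line specialization of the latter obtained by identifying $K_2$ with $K_{1,1}$.
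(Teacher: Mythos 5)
Your proof is correct and follows the paper's own route exactly: the paper also obtains this corollary by identifying $K_2$ with $K_{1,1}$ and specializing Theorem \ref{t}, under which both entries of the maximum become $\theta(K_{m,n})$. Your added remark that Lemma \ref{le2} is only needed for the companion Corollary \ref{Cor2} is accurate.
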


\begin{corollary}\label{Cor2} The thickness of the Kronecker product of $K_{n,n}$ and $K_2$ is
$$\theta(K_{n,n}\times K_2)=\big\lceil\frac{n+2}{4}\big\rceil.$$ \end{corollary}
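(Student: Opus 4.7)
The plan is to chain together the two results stated immediately before the corollary. Since $K_2$ is precisely $K_{1,1}$, I would first apply Corollary \ref{Cor3} with the roles of $m$ and $n$ both filled by $n$, which yields the identity $\theta(K_{n,n}\times K_2)=\theta(K_{n,n})$. Alternatively, one can bypass Corollary \ref{Cor3} and work directly from Theorem \ref{t}: setting $p=q=1$ gives
\[
\theta(K_{n,n}\times K_2)=\max\{\theta(K_{n\cdot 1,\,n\cdot 1}),\theta(K_{n\cdot 1,\,n\cdot 1})\}=\theta(K_{n,n}).
\]

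The second step is to invoke Lemma \ref{le2}, which provides the exact formula $\theta(K_{n,n})=\lceil (n+2)/4\rceil$. Substituting this into the equality from the previous step closes the argument immediately.

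There is essentially no obstacle here: the corollary is a direct specialization of the preceding machinery. The only thing worth being careful about is verifying that the decomposition in Lemma \ref{le1} actually identifies $K_{n,n}\times K_{1,1}$ with $K_{n,n}\cup K_{n,n}$, but since these two copies share the same vertex bipartition and the ``union'' in Lemma \ref{le1} is a union of edge sets on the product vertex set $V(K_{n,n})\times V(K_2)$, the multiplicities collapse correctly and we genuinely obtain $\theta(K_{n,n})$ rather than $2\theta(K_{n,n})$. Hence the proof reduces to a single line citing Corollary \ref{Cor3} and Lemma \ref{le2}.
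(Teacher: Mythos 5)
Your proof is correct and is exactly the paper's argument: the authors obtain this corollary directly from Theorem \ref{t} (equivalently Corollary \ref{Cor3}) together with Lemma \ref{le2}. One small clarification on your parenthetical worry: in Lemma \ref{le1} the two copies $K_{mp,nq}$ and $K_{mq,np}$ are \emph{vertex-disjoint} components of the product (they do not share a bipartition), which is precisely why the thickness of the union is the maximum rather than the sum --- but this is already what Theorem \ref{t} asserts, so your citation chain goes through unchanged.
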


\section{The thickness of the Kronecker product graph $K_{n,n,n}\times K_2$ }

Let $(X,Y,Z)$ be the vertex partition of the complete tripartite graph $K_{l,m,n}~(l\leq m\leq n)$ in which $X=\{x_1,\dots,x_l\}$, $Y=\{y_1,\dots,y_m\}$, $Z=\{z_1,\dots,z_n\}$. Let $\{1,2\}$ be the vertex set of $K_2$.  We denote the vertex $(v,k)$ of $K_{l,m,n}\times K_2$ by $v^k$ in which $v\in V(K_{n,n,n})$ and $k\in \{1,2\}$. For $k=1,2$, we denote $X^k=\{x_1^k,\dots,x_l^k\}$, $Y^k=\{y_1^k,\dots,y_m^k\}$ and $Z^k=\{z_1^k,\dots,z_n^k\}$. In Figure \ref{figure 3}, we draw a sketch of the graph $K_{l,m,n}\times K_2$, in which the edge joining two vertex set indicates that each vertex in one vertex set  is adjacent to each vertex in another vertex set. Suppose $G(X^1,Y^2)$ is the graph induced by the vertex sets $X^1$ and $Y^2$ of $K_{l,m,n}\times K_2$, then $G(X^1,Y^2)$ is  isomorphic to $K_{l,m}$, the graphs $G(Y^1,Z^2)$, $G(Z^1,X^2)$, $G(X^2,Y^1)$, $G(Y^2,Z^1)$ and $G(Z^2,X^1)$ are defined analogously. We define $$G^1=G(X^1,Y^2)\cup G(Y^1,Z^2)\cup G(Z^1,X^2)$$and $$G^2=G(X^2,Y^1)\cup G(Y^2,Z^1)\cup G(Z^2,X^1),$$ then $K_{l,m,n}\times K_2=G^1\cup G^2$.

\begin{figure}[H]
\begin{center}
\begin{tikzpicture}
[xscale=1]
\tikzstyle{every node}=[font=\tiny,scale=0.8]
[inner sep=0pt]\tiny
\node(11) at(0,2)[circle,draw]{\tiny $X^1$};
\node(12) at(2,2)[circle,draw]{\tiny $Y^1$};
\node(13) at(4,2)[circle,draw]{\tiny $Z^1$};
\node(21) at(0,0)[circle,draw]{\tiny $X^2$};
\node(22) at(2,0)[circle,draw]{\tiny $Y^2$};
\node(23) at(4,0)[circle,draw]{\tiny $Z^2$};
\draw[-](11) to  (22);\draw[-](11) to  (23);
\draw[-](12) to  (21);\draw[-](12) to  (23);
\draw[-](13) to  (21);\draw[-](13) to  (22);
\end{tikzpicture}
\caption{The graph $K_{l,m,n}\times K_2$}
\label{figure 3}
\end{center}
\end{figure}
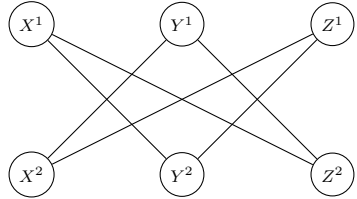

\begin{theorem}\label{th3} The thickness of the Kronecker product graph $K_{l,m,n}\times K_2~(l\leq m\leq n)$ satisfies the inequality
$$
\big\lceil\frac{lm+ln+mn}{2(l+m+n)-2}\big\rceil    \leq  \theta(K_{l,m,n}\times K_2)   \leq     2\theta(K_{m,n}).
$$
\end{theorem}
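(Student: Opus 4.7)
The plan is to establish the lower bound by applying the triangle-free Euler-formula estimate of Theorem~\ref{2.2}, and to establish the upper bound by using the decomposition $K_{l,m,n}\times K_2=G^1\cup G^2$ already introduced just before the statement.

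For the lower bound, first I would verify that $K_{l,m,n}\times K_2$ contains no triangle. This is immediate from the product construction: for any graph $G$, every edge of $G\times K_2$ joins a vertex in $V(G)\times\{1\}$ to one in $V(G)\times\{2\}$, so $G\times K_2$ is bipartite and hence triangle-free. I can therefore invoke Theorem~\ref{2.2} with $G=K_{l,m,n}$ and $H=K_2$, substituting $|E(K_{l,m,n})|=lm+ln+mn$, $|V(K_{l,m,n})|=l+m+n$, $|E(K_2)|=1$ and $|V(K_2)|=2$; this immediately produces the claimed $\lceil (lm+ln+mn)/(2(l+m+n)-2)\rceil$ as a lower bound on $\theta(K_{l,m,n}\times K_2)$.

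For the upper bound I would use the decomposition $K_{l,m,n}\times K_2=G^1\cup G^2$, where $G^1=G(X^1,Y^2)\cup G(Y^1,Z^2)\cup G(Z^1,X^2)$ and $G^2$ is its analogue with the superscripts swapped. The key structural observation is that the three summands constituting $G^1$ are pairwise vertex-disjoint, since the pairs $(X^1,Y^2)$, $(Y^1,Z^2)$ and $(Z^1,X^2)$ together use six distinct sets among $X^1,Y^1,Z^1,X^2,Y^2,Z^2$. Consequently $G^1$ is the disjoint union of graphs isomorphic to $K_{l,m}$, $K_{m,n}$ and $K_{n,l}$, so $\theta(G^1)=\max\{\theta(K_{l,m}),\theta(K_{m,n}),\theta(K_{n,l})\}$. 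Because $l\le m\le n$, both $K_{l,m}$ and $K_{l,n}$ embed as subgraphs of $K_{m,n}$, and thickness is subgraph-monotone (any planar decomposition of a host graph restricts to a planar decomposition of a subgraph), so $\theta(G^1)\le\theta(K_{m,n})$. The identical argument gives $\theta(G^2)\le\theta(K_{m,n})$, and adding the two decompositions yields $\theta(K_{l,m,n}\times K_2)\le 2\theta(K_{m,n})$.

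The calculations themselves are routine; the one step that genuinely matters is the vertex-disjointness of the three summands making up each $G^i$, which is what lets me replace a sum of three thicknesses by a single maximum. Without that observation the naive estimate would inflate the bound by a factor of three and destroy the conclusion, so that is the step I would highlight carefully in the writeup; everything else is standard bookkeeping.
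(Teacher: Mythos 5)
Your proposal is correct and follows essentially the same route as the paper: the lower bound via the triangle-free Euler bound (the paper routes this through Theorem \ref{th3.4}, which itself rests on Theorem \ref{2.2} and the bipartiteness of $G\times K_2$, exactly as you argue), and the upper bound via the decomposition $K_{l,m,n}\times K_2=G^1\cup G^2$ with the vertex-disjointness of the three complete bipartite summands in each $G^i$ reducing the bound to a single maximum, which equals $\theta(K_{m,n})$ since $l\leq m\leq n$. No substantive differences.
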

\begin{proof}
From Theorem \ref{th3.4}, one can get the lower bound in this theorem easily.
Any two graphs of $G(X^1,Y^2)$, $G(Y^1,Z^2)$ and $G(Z^1,X^2)$ are disjoint with each other and $l\leq m\leq n$, so we have $$\theta(G^1)\leq Max\{\theta(G(X^1,Y^2),\theta(G(Y^1,Z^2),\theta(G(Z^1,X^2)\}=\theta(K_{m,n}).$$ Similarly, we have $$\theta(G^2)\leq Max\{\theta(G(X^2,Y^1),\theta(G(Y^2,Z^1),\theta(G(Z^2,X^1))\}=\theta(K_{m,n}).$$
Due to the graph $K_{l,m,n}\times K_2=G^1 \cup G^2$, we have $\theta(K_{l,m,n}\times K_2)\leq 2\theta(K_{m,n})$. Summarizing the above, the theorem is obtained.
\end{proof}

In the following, we will discuss the thickness of $K_{n,n,n}\times K_2$ and  we will see when $n=4p+2$, the upper and lower bound in Theorem \ref{th3} are equal, so both bounds in Theorem \ref{th3} are sharp.

\begin{lemma}\label{le3} When $n=4p$, there exists a planar decomposition of the Kronecker product graph $K_{n,n,n}\times K_2$ with $2p+1$ subgraphs.
\end{lemma}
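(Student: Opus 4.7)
The plan is to apply Lemma \ref{l3.2} to each of the six $K_{4p,4p}$-pieces that make up $K_{n,n,n}\times K_2$, and then to merge the six leftover perfect matchings into one extra planar subgraph, so that a naive count of $2(p+1)$ subgraphs is reduced to $2p+1$.

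Recall that $K_{n,n,n}\times K_2=G^1\cup G^2$, where each of the three summands of $G^i$ is a copy of $K_{4p,4p}$; crucially, the three summands inside a given $G^i$ are pairwise vertex-disjoint, since they occupy the six disjoint parts $X^k,Y^k,Z^k$ in pairs. I would apply Lemma \ref{l3.2} to each of the six copies of $K_{4p,4p}$, obtaining in each case a decomposition into $p+1$ planar subgraphs whose last member is a perfect matching of the form $\{u_iv_i\}$. For every $i$ with $1\le i\le p$, let $F^1_i$ (respectively $F^2_i$) be the vertex-disjoint union of the $i$-th subgraphs taken from the three pieces of $G^1$ (respectively $G^2$); each $F^j_i$ is planar, being a disjoint union of three planar graphs.

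It remains to handle the six leftover perfect matchings, one inside each $G(\cdot,\cdot)$. I would combine them all into a single spanning subgraph $F^*$ of $K_{n,n,n}\times K_2$ and observe that every vertex of $F^*$ has degree exactly two: a vertex in $X^1$, for instance, is matched to a vertex of $Y^2$ by the $G^1$-matching inside $G(X^1,Y^2)$ and to a vertex of $Z^2$ by the $G^2$-matching inside $G(Z^2,X^1)$, and an analogous statement holds for each of the other five parts. Hence $F^*$ is $2$-regular, so it is a disjoint union of cycles and therefore planar.

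The family $\{F^1_1,\dots,F^1_p,\ F^2_1,\dots,F^2_p,\ F^*\}$ is then a planar decomposition of $K_{n,n,n}\times K_2$ with $2p+1$ members, which is the desired bound. The only non-routine step is the degree-counting argument that the six leftover matchings assemble into a $2$-regular graph; once that is observed, planarity of $F^*$ is immediate and the rest is bookkeeping driven by Lemma \ref{l3.2}.
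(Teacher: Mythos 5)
Your proposal is correct and follows essentially the same route as the paper: apply the $K_{4p,4p}$ decomposition of Lemma \ref{l3.2} to each of the six vertex-disjoint bipartite pieces, group the first $p$ layers into $2p$ planar subgraphs, and collect the six leftover perfect matchings into one final planar subgraph. The paper identifies that last subgraph explicitly as $n$ disjoint $6$-cycles $x^1_iy^2_iz^1_ix^2_iy^1_iz^2_ix^1_i$, whereas you reach the same conclusion via the $2$-regularity argument; both are valid.
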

\begin{proof}
Because $|X^k|=|Y^k|=|Z^k|=n$ $(k=1,2)$, all the graphs $G(X^1,Y^2)$, \\$ G(Y^1,Z^2), G(Z^1,X^2), G(X^2,Y^1), G(Y^2,Z^1), G(Z^2,X^1)$ are isomorphic to $K_{n,n}$.

Let $\{G_1,\dots,G_{p+1}\}$ be the planar decomposition of $K_{n,n}$ as shown in Figure \ref{figure 2}. For $1\leq r\leq p+1$, $G_r$ is a bipartite graph, so we also denote it by $G_r(V,U)$. In $G_r(V,U)$, we replace the vertex set $V$ by $X^1$, $U$ by $Y^2$, i.e.,
for each $1\leq i\leq n$, replace the vertex $v_i$ by $x_i^1$, and $u_i$ by $y_i^2$, then we get graph $ G_r(X^1,Y^2)$. Analogously, we  obtain graphs $G_r(Y^1,Z^2)$,$G_r(Z^1,X^2)$, $G_r(X^2,Y^1)$, $G_r(Y^2,Z^1)$ and $G_r(Z^2,X^1)$.

For $1\leq r\leq p+1$, let $$G^1_r=G_r(X^1,Y^2)\cup G_r(Y^1,Z^2)\cup G_r(Z^1,X^2)$$
and $$G^2_{r}=G_{r}(X^2,Y^1)\cup G_r(Y^2,Z^1)\cup G_r(Z^2,X^1).$$
Because $G_r(X^1,Y^2),G_r(Y^1,Z^2),G_r(Z^1,X^2)$ are all planar graphs and they are disjoint with each other, $G^1_r$ is a planar graph.
For the same reason, we have that $G^2_{r}$ is also a planar graph.

Let graph $G_{p+1}$ be the graph $G^1_{p+1}\cup G^2_{p+1}$.  We have
\begin{align}
G_{p+1} &=G^1_{p+1}\cup G^2_{p+1}\notag\\
               &= \{\mathop{\cup}\limits^{n}_{i=1}(x^1_iy^2_i\cup y^1_iz^2_i\cup z^1_ix^2_i)\}   \cup
               \{\mathop{\cup}\limits^{n}_{i=1}(x^2_iy^1_i\cup y^2_iz^1_i\cup z^2_ix^1_i)\}\notag\\
               &=\mathop{\cup}\limits^{n}_{i=1}(x^1_iy^2_iz^1_ix^2_iy^1_iz^2_ix^1_i).\notag
\end{align}
It is easy to see $G_{p+1}$ consists of $n$ disjoint cycles of length $6$, hence $G_{p+1}$ is a planar graph.

Because
$$G(X^1,Y^2)=\mathop{\cup}\limits^{p+1}_{r=1} G_r(X^1,Y^2),~~~~~~~~~~ G(Y^1,Z^2)=\mathop{\cup}\limits^{p+1}_{r=1} G_r(Y^1,Z^2), $$ $$G(Z^1,X^2)=\mathop{\cup}\limits^{p+1}_{r=1} G_r(Z^1,X^2),~~~~~~~~~~ G(X^2,Y^1)=\mathop{\cup}\limits^{p+1}_{r=1} G_r(X^2,Y^1),$$
and $$G(Y^2,Z^1)=\mathop{\cup}\limits^{p+1}_{r=1} G_r(Y^2,Z^1),~~~~~~~~~~ G(Z^2,X^1)=\mathop{\cup}\limits^{p+1}_{r=1} G_r(Z^2,X^1),$$
we have \begin{align}
K_{n,n,n}\times K_2 &=G^1\cup G^2\notag\\
               &= \mathop{\cup}\limits^{p+1}_{r=1}(G_r^1\cup G_r^2)\notag\\
               &=\mathop{\cup}\limits^{p}_{r=1}(G_r^1\cup G_r^2)\cup G_{p+1}.\notag
\end{align}
\\So we get a planar decomposition  of $K_{4p,4p,4p}\times K_2$ with $2p+1$ subgraphs $G^1_1$, $\dots$, $G^1_p$, $G^2_1$, $\dots$, $G^2_p$, $G_{p+1}$. The proof is completed.
\end{proof}

 We draw the planar decomposition of  $K_{8,8,8}\times K_2$ as shown in Figure \ref{figure 8}.
\begin{figure}[H]
\begin{center}
\begin{tikzpicture}
[scale=0.7]
\tikzstyle{every node}=[font=\tiny,scale=0.7]
\tiny
[inner sep=0pt]
\filldraw [black] (1.5,0) circle (1.2pt)
                  (-0.5,0) circle (1.2pt)
                  (0.5,1) circle (1.2pt)
                  (0.5,-1) circle (1.2pt);
\filldraw [black] (3.5,0) circle (1.2pt)
                  (-2.5,0) circle (1.2pt)
                  (0.5,3) circle (1.2pt)
                  (0.5,-3) circle (1.2pt);
\filldraw [black] (2.5,0.15) circle (1.2pt)
                  (2.5,-0.15) circle (1.2pt)
                  (-1.5,0.15) circle (1.2pt)
                  (-1.5,-0.15) circle (1.2pt);
\filldraw [black] (0.65,2) circle (1.2pt)
                  (0.65,-2) circle (1.2pt)
                  (0.35,2) circle (1.2pt)
                  (0.35,-2) circle (1.2pt);
\draw  (1.5,0)--(0.5,1)--(-0.5,0)--(0.5,-1)--(1.5,0);
\draw  (3.5,0)--(0.5,3)--(-2.5,0)--(0.5,-3)--(3.5,0);
\draw  (1.5,0)--(2.5,0.15)--(3.5,0)--(2.5,-0.15)--(1.5,0);
\draw  (-0.5,0)--(-1.5,0.15)--(-2.5,0)--(-1.5,-0.15)--(-0.5,0);
\draw  (0.5,-1)--(0.65,-2)--(0.5,-3)--(0.35,-2)--(0.5,-1);
\draw  (0.5,1)--(0.65,2)--(0.5,3)--(0.35,2)--(0.5,1);
\draw  (0.5,-3)..controls+(0.1,0.1)and+(0.5,-1.2)..(1.5,0);
\draw  (0.5,3)..controls+(-0.01,-0.01)and+(-0.5,1.2)..(-0.5,0);
\draw  (-2.5,0)..controls+(0.1,-0.1)and+(-1.2,-0.5)..(0.5,-1);
\draw  (3.5,0)..controls+(-0.1,0.1)and+(1.2,0.5)..(0.5,1);
\draw  (2.5,0.4)  node { $x^1_6$};\draw  (2.45,-0.40)  node { $x^1_8$};
\draw  (-1.5,0.44)  node { $x^1_5$};\draw  (-1.5,-0.40)  node { $x^1_7$};
\draw  (0.25,1.1)  node { $x^1_3$};\draw  (0.75,-1)  node { $x^1_2$};
\draw  (0.24,3.1)  node { $x^1_1$};\draw  (0.24,-3.1)  node { $x^1_4$};
\draw  (0.9,2)  node { $y^2_6$};\draw  (0.10,2)  node { $y^2_5$};
\draw  (0.9,-2)  node { $y^2_8$};\draw  (0.10,-2)  node { $y^2_7$};
\draw  (1.6,0.26)  node { $y^2_1$};\draw  (-0.6,-0.25)  node { $y^2_4$};
\draw  (3.6,-0.25)  node { $y^2_2$};\draw  (-2.55,0.26)  node { $y^2_3$};
\filldraw [black] (8,0) circle (1.2pt)
                  (6,0) circle (1.2pt)
                  (7,1) circle (1.2pt)
                  (7,-1) circle (1.2pt);
\filldraw [black] (10,0) circle (1.2pt)
                  (4,0) circle (1.2pt)
                  (7,3) circle (1.2pt)
                  (7,-3) circle (1.2pt);
\filldraw [black] (9,0.15) circle (1.2pt)
                  (9,-0.15) circle (1.2pt)
                  (5,0.15) circle (1.2pt)
                  (5,-0.15) circle (1.2pt);
\filldraw [black] (7.15,2) circle (1.2pt)
                  (7.15,-2) circle (1.2pt)
                  (6.85,2) circle (1.2pt)
                  (6.85,-2) circle (1.2pt);
\draw  (8,0)--(7,1)--(6,0)--(7,-1)--(8,0);
\draw  (10,0)--(7,3)--(4,0)--(7,-3)--(10,0);
\draw  (8,0)--(9,0.15)--(10,0)--(9,-0.15)--(8,0);
\draw  (6,0)--(5,0.15)--(4,0)--(5,-0.15)--(6,0);
\draw  (7,-1)--(7.15,-2)--(7,-3)--(6.85,-2)--(7,-1);
\draw  (7,1)--(7.15,2)--(7,3)--(6.85,2)--(7,1);
\draw  (7,-3)..controls+(0.1,0.1)and+(0.5,-1.2)..(8,0);
\draw  (7,3)..controls+(-0.01,-0.01)and+(-0.5,1.2)..(6,0);
\draw  (4,0)..controls+(0.1,-0.1)and+(-1.2,-0.5)..(7,-1);
\draw  (10,0)..controls+(-0.1,0.1)and+(1.2,0.5)..(7,1);
\draw  (9,0.4)  node { $y^1_6$};\draw  (8.95,-0.40)  node { $y^1_8$};
\draw  (5,0.44)  node { $y^1_5$};\draw  (5,-0.40)  node { $y^1_7$};
\draw  (6.75,1.1)  node { $y^1_3$};\draw  (7.25,-1)  node { $y^1_2$};
\draw  (6.74,3.1)  node { $y^1_1$};\draw  (6.74,-3.1)  node { $y^1_4$};
\draw  (7.4,2)  node { $z^2_6$};\draw  (6.60,2)  node { $z^2_5$};
\draw  (7.4,-2)  node { $z^2_8$};\draw  (6.60,-2)  node { $z^2_7$};
\draw  (8.1,0.26)  node { $z^2_1$};\draw  (5.9,-0.25)  node { $z^2_4$};
\draw  (10.1,-0.25)  node { $z^2_2$};\draw  (3.95,0.26)  node { $z^2_3$};
\filldraw [black] (14.5,0) circle (1.2pt)
                  (12.5,0) circle (1.2pt)
                  (13.5,1) circle (1.2pt)
                  (13.5,-1) circle (1.2pt);
\filldraw [black] (16.5,0) circle (1.2pt)
                  (10.5,0) circle (1.2pt)
                  (13.5,3) circle (1.2pt)
                  (13.5,-3) circle (1.2pt);
\filldraw [black] (15.5,0.15) circle (1.2pt)
                  (15.5,-0.15) circle (1.2pt)
                  (11.5,0.15) circle (1.2pt)
                  (11.5,-0.15) circle (1.2pt);
\filldraw [black] (13.65,2) circle (1.2pt)
                  (13.65,-2) circle (1.2pt)
                  (13.35,2) circle (1.2pt)
                  (13.35,-2) circle (1.2pt);
\draw  (14.5,0)--(13.5,1)--(12.5,0)--(13.5,-1)--(14.5,0);
\draw  (16.5,0)--(13.5,3)--(10.5,0)--(13.5,-3)--(16.5,0);
\draw  (14.5,0)--(15.5,0.15)--(16.5,0)--(15.5,-0.15)--(14.5,0);
\draw  (12.5,0)--(11.5,0.15)--(10.5,0)--(11.5,-0.15)--(12.5,0);
\draw  (13.5,-1)--(13.65,-2)--(13.5,-3)--(13.35,-2)--(13.5,-1);
\draw  (13.5,1)--(13.65,2)--(13.5,3)--(13.35,2)--(13.5,1);
\draw  (13.5,-3)..controls+(0.1,0.1)and+(0.5,-1.2)..(14.5,0);
\draw  (13.5,3)..controls+(-0.01,-0.01)and+(-0.5,1.2)..(12.5,0);
\draw  (10.5,0)..controls+(0.1,-0.1)and+(-1.2,-0.5)..(13.5,-1);
\draw  (16.5,0)..controls+(-0.1,0.1)and+(1.2,0.5)..(13.5,1);
\draw  (15.5,0.4)  node { $z^1_6$};\draw  (15.45,-0.40)  node { $z^1_8$};
\draw  (11.5,0.44)  node { $z^1_5$};\draw  (11.5,-0.40)  node { $z^1_7$};
\draw  (13.25,1.1)  node { $z^1_3$};\draw  (13.75,-1)  node { $z^1_2$};
\draw  (13.24,3.1)  node { $z^1_1$};\draw  (13.24,-3.1)  node { $z^1_4$};
\draw  (13.9,2)  node { $x^2_6$};\draw  (13.10,2)  node { $x^2_5$};
\draw  (13.9,-2)  node { $x^2_8$};\draw  (13.10,-2)  node { $x^2_7$};
\draw  (14.6,0.26)  node { $x^2_1$};\draw  (12.4,-0.25)  node { $x^2_4$};
\draw  (16.5,-0.27)  node { $x^2_2$};\draw  (10.45,0.26)  node { $x^2_3$};
\end{tikzpicture}
\\ (a) The graph $G^1_1$
\end{center}
\end{figure}

\begin{figure}[H]
\begin{center}
\begin{tikzpicture}
[scale=0.7]
\tikzstyle{every node}=[font=\tiny,scale=0.7]
\tiny
[inner sep=0pt]
\filldraw [black] (1.5,0) circle (1.2pt)
                  (-0.5,0) circle (1.2pt)
                  (0.5,1) circle (1.2pt)
                  (0.5,-1) circle (1.2pt);
\filldraw [black] (3.5,0) circle (1.2pt)
                  (-2.5,0) circle (1.2pt)
                  (0.5,3) circle (1.2pt)
                  (0.5,-3) circle (1.2pt);
\filldraw [black] (2.5,0.15) circle (1.2pt)
                  (2.5,-0.15) circle (1.2pt)
                  (-1.5,0.15) circle (1.2pt)
                  (-1.5,-0.15) circle (1.2pt);
\filldraw [black] (0.65,2) circle (1.2pt)
                  (0.65,-2) circle (1.2pt)
                  (0.35,2) circle (1.2pt)
                  (0.35,-2) circle (1.2pt);
\draw  (1.5,0)--(0.5,1)--(-0.5,0)--(0.5,-1)--(1.5,0);
\draw  (3.5,0)--(0.5,3)--(-2.5,0)--(0.5,-3)--(3.5,0);
\draw  (1.5,0)--(2.5,0.15)--(3.5,0)--(2.5,-0.15)--(1.5,0);
\draw  (-0.5,0)--(-1.5,0.15)--(-2.5,0)--(-1.5,-0.15)--(-0.5,0);
\draw  (0.5,-1)--(0.65,-2)--(0.5,-3)--(0.35,-2)--(0.5,-1);
\draw  (0.5,1)--(0.65,2)--(0.5,3)--(0.35,2)--(0.5,1);
\draw  (0.5,-3)..controls+(0.1,0.1)and+(0.5,-1.2)..(1.5,0);
\draw  (0.5,3)..controls+(-0.01,-0.01)and+(-0.5,1.2)..(-0.5,0);
\draw  (-2.5,0)..controls+(0.1,-0.1)and+(-1.2,-0.5)..(0.5,-1);
\draw  (3.5,0)..controls+(-0.1,0.1)and+(1.2,0.5)..(0.5,1);
\draw  (2.5,0.4)  node { $x^1_2$};\draw  (2.45,-0.40)  node { $x^1_4$};
\draw  (-1.5,0.44)  node { $x^1_1$};\draw  (-1.5,-0.40)  node { $x^1_3$};
\draw  (0.25,1.1)  node { $x^1_7$};\draw  (0.75,-1)  node { $x^1_6$};
\draw  (0.24,3.1)  node { $x^1_5$};\draw  (0.24,-3.1)  node { $x^1_8$};
\draw  (0.9,2)  node { $y^2_2$};\draw  (0.10,2)  node { $y^2_1$};
\draw  (0.9,-2)  node { $y^2_4$};\draw  (0.10,-2)  node { $y^2_3$};
\draw  (1.6,0.26)  node { $y^2_5$};\draw  (-0.6,-0.25)  node { $y^2_8$};
\draw  (3.6,-0.25)  node { $y^2_6$};\draw  (-2.55,0.26)  node { $y^2_7$};
\filldraw [black] (8,0) circle (1.2pt)
                  (6,0) circle (1.2pt)
                  (7,1) circle (1.2pt)
                  (7,-1) circle (1.2pt);
\filldraw [black] (10,0) circle (1.2pt)
                  (4,0) circle (1.2pt)
                  (7,3) circle (1.2pt)
                  (7,-3) circle (1.2pt);
\filldraw [black] (9,0.15) circle (1.2pt)
                  (9,-0.15) circle (1.2pt)
                  (5,0.15) circle (1.2pt)
                  (5,-0.15) circle (1.2pt);
\filldraw [black] (7.15,2) circle (1.2pt)
                  (7.15,-2) circle (1.2pt)
                  (6.85,2) circle (1.2pt)
                  (6.85,-2) circle (1.2pt);
\draw  (8,0)--(7,1)--(6,0)--(7,-1)--(8,0);
\draw  (10,0)--(7,3)--(4,0)--(7,-3)--(10,0);
\draw  (8,0)--(9,0.15)--(10,0)--(9,-0.15)--(8,0);
\draw  (6,0)--(5,0.15)--(4,0)--(5,-0.15)--(6,0);
\draw  (7,-1)--(7.15,-2)--(7,-3)--(6.85,-2)--(7,-1);
\draw  (7,1)--(7.15,2)--(7,3)--(6.85,2)--(7,1);
\draw  (7,-3)..controls+(0.1,0.1)and+(0.5,-1.2)..(8,0);
\draw  (7,3)..controls+(-0.01,-0.01)and+(-0.5,1.2)..(6,0);
\draw  (4,0)..controls+(0.1,-0.1)and+(-1.2,-0.5)..(7,-1);
\draw  (10,0)..controls+(-0.1,0.1)and+(1.2,0.5)..(7,1);
\draw  (9,0.4)  node { $y^1_2$};\draw  (8.95,-0.40)  node { $y^1_4$};
\draw  (5,0.44)  node { $y^1_1$};\draw  (5,-0.40)  node { $y^1_3$};
\draw  (6.75,1.1)  node { $y^1_7$};\draw  (7.25,-1)  node { $y^1_6$};
\draw  (6.74,3.1)  node { $y^1_5$};\draw  (6.74,-3.1)  node { $y^1_8$};
\draw  (7.4,2)  node { $z^2_2$};\draw  (6.60,2)  node { $z^2_1$};
\draw  (7.4,-2)  node { $z^2_4$};\draw  (6.60,-2)  node { $z^2_3$};
\draw  (8.1,0.26)  node { $z^2_5$};\draw  (5.9,-0.25)  node { $z^2_8$};
\draw  (10.1,-0.25)  node { $z^2_6$};\draw  (3.95,0.26)  node { $z^2_7$};
\filldraw [black] (14.5,0) circle (1.2pt)
                  (12.5,0) circle (1.2pt)
                  (13.5,1) circle (1.2pt)
                  (13.5,-1) circle (1.2pt);
\filldraw [black] (16.5,0) circle (1.2pt)
                  (10.5,0) circle (1.2pt)
                  (13.5,3) circle (1.2pt)
                  (13.5,-3) circle (1.2pt);
\filldraw [black] (15.5,0.15) circle (1.2pt)
                  (15.5,-0.15) circle (1.2pt)
                  (11.5,0.15) circle (1.2pt)
                  (11.5,-0.15) circle (1.2pt);
\filldraw [black] (13.65,2) circle (1.2pt)
                  (13.65,-2) circle (1.2pt)
                  (13.35,2) circle (1.2pt)
                  (13.35,-2) circle (1.2pt);
\draw  (14.5,0)--(13.5,1)--(12.5,0)--(13.5,-1)--(14.5,0);
\draw  (16.5,0)--(13.5,3)--(10.5,0)--(13.5,-3)--(16.5,0);
\draw  (14.5,0)--(15.5,0.15)--(16.5,0)--(15.5,-0.15)--(14.5,0);
\draw  (12.5,0)--(11.5,0.15)--(10.5,0)--(11.5,-0.15)--(12.5,0);
\draw  (13.5,-1)--(13.65,-2)--(13.5,-3)--(13.35,-2)--(13.5,-1);
\draw  (13.5,1)--(13.65,2)--(13.5,3)--(13.35,2)--(13.5,1);
\draw  (13.5,-3)..controls+(0.1,0.1)and+(0.5,-1.2)..(14.5,0);
\draw  (13.5,3)..controls+(-0.01,-0.01)and+(-0.5,1.2)..(12.5,0);
\draw  (10.5,0)..controls+(0.1,-0.1)and+(-1.2,-0.5)..(13.5,-1);
\draw  (16.5,0)..controls+(-0.1,0.1)and+(1.2,0.5)..(13.5,1);
\draw  (15.5,0.4)  node { $z^1_2$};\draw  (15.45,-0.40)  node { $z^1_4$};
\draw  (11.5,0.44)  node { $z^1_1$};\draw  (11.5,-0.40)  node { $z^1_3$};
\draw  (13.25,1.1)  node { $z^1_7$};\draw  (13.75,-1)  node { $z^1_6$};
\draw  (13.24,3.1)  node { $z^1_5$};\draw  (13.24,-3.1)  node { $z^1_8$};
\draw  (13.9,2)  node { $x^2_2$};\draw  (13.10,2)  node { $x^2_1$};
\draw  (13.9,-2)  node { $x^2_4$};\draw  (13.10,-2)  node { $x^2_3$};
\draw  (14.6,0.26)  node { $x^2_5$};\draw  (12.4,-0.25)  node { $x^2_8$};
\draw  (16.5,-0.27)  node { $x^2_6$};\draw  (10.45,0.26)  node { $x^2_7$};
\end{tikzpicture}
\\ (b) The graph $G^1_2$
\end{center}
\end{figure}

\begin{figure}[H]
\begin{center}
\begin{tikzpicture}
[scale=0.7]
\tikzstyle{every node}=[font=\tiny,scale=0.7]
\tiny
[inner sep=0pt]
\filldraw [black] (1.5,0) circle (1.2pt)
                  (-0.5,0) circle (1.2pt)
                  (0.5,1) circle (1.2pt)
                  (0.5,-1) circle (1.2pt);
\filldraw [black] (3.5,0) circle (1.2pt)
                  (-2.5,0) circle (1.2pt)
                  (0.5,3) circle (1.2pt)
                  (0.5,-3) circle (1.2pt);
\filldraw [black] (2.5,0.15) circle (1.2pt)
                  (2.5,-0.15) circle (1.2pt)
                  (-1.5,0.15) circle (1.2pt)
                  (-1.5,-0.15) circle (1.2pt);
\filldraw [black] (0.65,2) circle (1.2pt)
                  (0.65,-2) circle (1.2pt)
                  (0.35,2) circle (1.2pt)
                  (0.35,-2) circle (1.2pt);
\draw  (1.5,0)--(0.5,1)--(-0.5,0)--(0.5,-1)--(1.5,0);
\draw  (3.5,0)--(0.5,3)--(-2.5,0)--(0.5,-3)--(3.5,0);
\draw  (1.5,0)--(2.5,0.15)--(3.5,0)--(2.5,-0.15)--(1.5,0);
\draw  (-0.5,0)--(-1.5,0.15)--(-2.5,0)--(-1.5,-0.15)--(-0.5,0);
\draw  (0.5,-1)--(0.65,-2)--(0.5,-3)--(0.35,-2)--(0.5,-1);
\draw  (0.5,1)--(0.65,2)--(0.5,3)--(0.35,2)--(0.5,1);
\draw  (0.5,-3)..controls+(0.1,0.1)and+(0.5,-1.2)..(1.5,0);
\draw  (0.5,3)..controls+(-0.01,-0.01)and+(-0.5,1.2)..(-0.5,0);
\draw  (-2.5,0)..controls+(0.1,-0.1)and+(-1.2,-0.5)..(0.5,-1);
\draw  (3.5,0)..controls+(-0.1,0.1)and+(1.2,0.5)..(0.5,1);
\draw  (2.5,0.4)  node { $x^2_6$};\draw  (2.45,-0.40)  node { $x^2_8$};
\draw  (-1.5,0.44)  node { $x^2_5$};\draw  (-1.5,-0.40)  node { $x^2_7$};
\draw  (0.25,1.1)  node { $x^2_3$};\draw  (0.75,-1)  node { $x^2_2$};
\draw  (0.24,3.1)  node { $x^2_1$};\draw  (0.24,-3.1)  node { $x^2_4$};
\draw  (0.9,2)  node { $y^1_6$};\draw  (0.10,2)  node { $y^1_5$};
\draw  (0.9,-2)  node { $y^1_8$};\draw  (0.10,-2)  node { $y^1_7$};
\draw  (1.6,0.26)  node { $y^1_1$};\draw  (-0.6,-0.25)  node { $y^1_4$};
\draw  (3.6,-0.25)  node { $y^1_2$};\draw  (-2.55,0.26)  node { $y^1_3$};
\filldraw [black] (8,0) circle (1.2pt)
                  (6,0) circle (1.2pt)
                  (7,1) circle (1.2pt)
                  (7,-1) circle (1.2pt);
\filldraw [black] (10,0) circle (1.2pt)
                  (4,0) circle (1.2pt)
                  (7,3) circle (1.2pt)
                  (7,-3) circle (1.2pt);
\filldraw [black] (9,0.15) circle (1.2pt)
                  (9,-0.15) circle (1.2pt)
                  (5,0.15) circle (1.2pt)
                  (5,-0.15) circle (1.2pt);
\filldraw [black] (7.15,2) circle (1.2pt)
                  (7.15,-2) circle (1.2pt)
                  (6.85,2) circle (1.2pt)
                  (6.85,-2) circle (1.2pt);
\draw  (8,0)--(7,1)--(6,0)--(7,-1)--(8,0);
\draw  (10,0)--(7,3)--(4,0)--(7,-3)--(10,0);
\draw  (8,0)--(9,0.15)--(10,0)--(9,-0.15)--(8,0);
\draw  (6,0)--(5,0.15)--(4,0)--(5,-0.15)--(6,0);
\draw  (7,-1)--(7.15,-2)--(7,-3)--(6.85,-2)--(7,-1);
\draw  (7,1)--(7.15,2)--(7,3)--(6.85,2)--(7,1);
\draw  (7,-3)..controls+(0.1,0.1)and+(0.5,-1.2)..(8,0);
\draw  (7,3)..controls+(-0.01,-0.01)and+(-0.5,1.2)..(6,0);
\draw  (4,0)..controls+(0.1,-0.1)and+(-1.2,-0.5)..(7,-1);
\draw  (10,0)..controls+(-0.1,0.1)and+(1.2,0.5)..(7,1);
\draw  (9,0.4)  node { $y^2_6$};\draw  (8.95,-0.40)  node { $y^2_8$};
\draw  (5,0.44)  node { $y^2_5$};\draw  (5,-0.40)  node { $y^2_7$};
\draw  (6.75,1.1)  node { $y^2_3$};\draw  (7.25,-1)  node { $y^2_2$};
\draw  (6.74,3.1)  node { $y^2_1$};\draw  (6.74,-3.1)  node { $y^2_4$};
\draw  (7.4,2)  node { $z^1_6$};\draw  (6.60,2)  node { $z^1_5$};
\draw  (7.4,-2)  node { $z^1_8$};\draw  (6.60,-2)  node { $z^1_7$};
\draw  (8.1,0.26)  node { $z^1_1$};\draw  (5.9,-0.25)  node { $z^1_4$};
\draw  (10.1,-0.25)  node { $z^1_2$};\draw  (3.95,0.26)  node { $z^1_3$};
\filldraw [black] (14.5,0) circle (1.2pt)
                  (12.5,0) circle (1.2pt)
                  (13.5,1) circle (1.2pt)
                  (13.5,-1) circle (1.2pt);
\filldraw [black] (16.5,0) circle (1.2pt)
                  (10.5,0) circle (1.2pt)
                  (13.5,3) circle (1.2pt)
                  (13.5,-3) circle (1.2pt);
\filldraw [black] (15.5,0.15) circle (1.2pt)
                  (15.5,-0.15) circle (1.2pt)
                  (11.5,0.15) circle (1.2pt)
                  (11.5,-0.15) circle (1.2pt);
\filldraw [black] (13.65,2) circle (1.2pt)
                  (13.65,-2) circle (1.2pt)
                  (13.35,2) circle (1.2pt)
                  (13.35,-2) circle (1.2pt);
\draw  (14.5,0)--(13.5,1)--(12.5,0)--(13.5,-1)--(14.5,0);
\draw  (16.5,0)--(13.5,3)--(10.5,0)--(13.5,-3)--(16.5,0);
\draw  (14.5,0)--(15.5,0.15)--(16.5,0)--(15.5,-0.15)--(14.5,0);
\draw  (12.5,0)--(11.5,0.15)--(10.5,0)--(11.5,-0.15)--(12.5,0);
\draw  (13.5,-1)--(13.65,-2)--(13.5,-3)--(13.35,-2)--(13.5,-1);
\draw  (13.5,1)--(13.65,2)--(13.5,3)--(13.35,2)--(13.5,1);
\draw  (13.5,-3)..controls+(0.1,0.1)and+(0.5,-1.2)..(14.5,0);
\draw  (13.5,3)..controls+(-0.01,-0.01)and+(-0.5,1.2)..(12.5,0);
\draw  (10.5,0)..controls+(0.1,-0.1)and+(-1.2,-0.5)..(13.5,-1);
\draw  (16.5,0)..controls+(-0.1,0.1)and+(1.2,0.5)..(13.5,1);
\draw  (15.5,0.4)  node { $z^2_6$};\draw  (15.45,-0.40)  node { $z^2_8$};
\draw  (11.5,0.44)  node { $z^2_5$};\draw  (11.5,-0.40)  node { $z^2_7$};
\draw  (13.25,1.1)  node { $z^2_3$};\draw  (13.75,-1)  node { $z^2_2$};
\draw  (13.24,3.1)  node { $z^2_1$};\draw  (13.24,-3.1)  node { $z^2_4$};
\draw  (13.9,2)  node { $x^1_6$};\draw  (13.10,2)  node { $x^1_5$};
\draw  (13.9,-2)  node { $x^1_8$};\draw  (13.10,-2)  node { $x^1_7$};
\draw  (14.6,0.26)  node { $x^1_1$};\draw  (12.4,-0.25)  node { $x^1_4$};
\draw  (16.5,-0.27)  node { $x^1_2$};\draw  (10.45,0.26)  node { $x^1_3$};
\end{tikzpicture}
\\ (c) The graph $G^2_1$
\end{center}
\end{figure}

\begin{figure}[H]
\begin{center}
\begin{tikzpicture}
[scale=0.7]
\tikzstyle{every node}=[font=\tiny,scale=0.7]
\tiny
[inner sep=0pt]
\filldraw [black] (1.5,0) circle (1.2pt)
                  (-0.5,0) circle (1.2pt)
                  (0.5,1) circle (1.2pt)
                  (0.5,-1) circle (1.2pt);
\filldraw [black] (3.5,0) circle (1.2pt)
                  (-2.5,0) circle (1.2pt)
                  (0.5,3) circle (1.2pt)
                  (0.5,-3) circle (1.2pt);
\filldraw [black] (2.5,0.15) circle (1.2pt)
                  (2.5,-0.15) circle (1.2pt)
                  (-1.5,0.15) circle (1.2pt)
                  (-1.5,-0.15) circle (1.2pt);
\filldraw [black] (0.65,2) circle (1.2pt)
                  (0.65,-2) circle (1.2pt)
                  (0.35,2) circle (1.2pt)
                  (0.35,-2) circle (1.2pt);
\draw  (1.5,0)--(0.5,1)--(-0.5,0)--(0.5,-1)--(1.5,0);
\draw  (3.5,0)--(0.5,3)--(-2.5,0)--(0.5,-3)--(3.5,0);
\draw  (1.5,0)--(2.5,0.15)--(3.5,0)--(2.5,-0.15)--(1.5,0);
\draw  (-0.5,0)--(-1.5,0.15)--(-2.5,0)--(-1.5,-0.15)--(-0.5,0);
\draw  (0.5,-1)--(0.65,-2)--(0.5,-3)--(0.35,-2)--(0.5,-1);
\draw  (0.5,1)--(0.65,2)--(0.5,3)--(0.35,2)--(0.5,1);
\draw  (0.5,-3)..controls+(0.1,0.1)and+(0.5,-1.2)..(1.5,0);
\draw  (0.5,3)..controls+(-0.01,-0.01)and+(-0.5,1.2)..(-0.5,0);
\draw  (-2.5,0)..controls+(0.1,-0.1)and+(-1.2,-0.5)..(0.5,-1);
\draw  (3.5,0)..controls+(-0.1,0.1)and+(1.2,0.5)..(0.5,1);
\draw  (2.5,0.4)  node { $x^2_2$};\draw  (2.45,-0.40)  node { $x^2_4$};
\draw  (-1.5,0.44)  node { $x^2_1$};\draw  (-1.5,-0.40)  node { $x^2_3$};
\draw  (0.25,1.1)  node { $x^2_7$};\draw  (0.75,-1)  node { $x^2_6$};
\draw  (0.24,3.1)  node { $x^2_5$};\draw  (0.24,-3.1)  node { $x^2_8$};
\draw  (0.9,2)  node { $y^1_2$};\draw  (0.10,2)  node { $y^1_1$};
\draw  (0.9,-2)  node { $y^1_4$};\draw  (0.10,-2)  node { $y^1_3$};
\draw  (1.6,0.26)  node { $y^1_5$};\draw  (-0.6,-0.25)  node { $y^1_8$};
\draw  (3.6,-0.25)  node { $y^1_6$};\draw  (-2.55,0.26)  node { $y^1_7$};
\filldraw [black] (8,0) circle (1.2pt)
                  (6,0) circle (1.2pt)
                  (7,1) circle (1.2pt)
                  (7,-1) circle (1.2pt);
\filldraw [black] (10,0) circle (1.2pt)
                  (4,0) circle (1.2pt)
                  (7,3) circle (1.2pt)
                  (7,-3) circle (1.2pt);
\filldraw [black] (9,0.15) circle (1.2pt)
                  (9,-0.15) circle (1.2pt)
                  (5,0.15) circle (1.2pt)
                  (5,-0.15) circle (1.2pt);
\filldraw [black] (7.15,2) circle (1.2pt)
                  (7.15,-2) circle (1.2pt)
                  (6.85,2) circle (1.2pt)
                  (6.85,-2) circle (1.2pt);
\draw  (8,0)--(7,1)--(6,0)--(7,-1)--(8,0);
\draw  (10,0)--(7,3)--(4,0)--(7,-3)--(10,0);
\draw  (8,0)--(9,0.15)--(10,0)--(9,-0.15)--(8,0);
\draw  (6,0)--(5,0.15)--(4,0)--(5,-0.15)--(6,0);
\draw  (7,-1)--(7.15,-2)--(7,-3)--(6.85,-2)--(7,-1);
\draw  (7,1)--(7.15,2)--(7,3)--(6.85,2)--(7,1);
\draw  (7,-3)..controls+(0.1,0.1)and+(0.5,-1.2)..(8,0);
\draw  (7,3)..controls+(-0.01,-0.01)and+(-0.5,1.2)..(6,0);
\draw  (4,0)..controls+(0.1,-0.1)and+(-1.2,-0.5)..(7,-1);
\draw  (10,0)..controls+(-0.1,0.1)and+(1.2,0.5)..(7,1);
\draw  (9,0.4)  node { $y^2_2$};\draw  (8.95,-0.40)  node { $y^2_4$};
\draw  (5,0.44)  node { $y^2_1$};\draw  (5,-0.40)  node { $y^2_3$};
\draw  (6.75,1.1)  node { $y^2_7$};\draw  (7.25,-1)  node { $y^2_6$};
\draw  (6.74,3.1)  node { $y^2_5$};\draw  (6.74,-3.1)  node { $y^2_8$};
\draw  (7.4,2)  node { $z^1_2$};\draw  (6.60,2)  node { $z^1_1$};
\draw  (7.4,-2)  node { $z^1_4$};\draw  (6.60,-2)  node { $z^1_3$};
\draw  (8.1,0.26)  node { $z^1_5$};\draw  (5.9,-0.25)  node { $z^1_8$};
\draw  (10.1,-0.25)  node { $z^1_6$};\draw  (3.95,0.26)  node { $z^1_7$};
\filldraw [black] (14.5,0) circle (1.2pt)
                  (12.5,0) circle (1.2pt)
                  (13.5,1) circle (1.2pt)
                  (13.5,-1) circle (1.2pt);
\filldraw [black] (16.5,0) circle (1.2pt)
                  (10.5,0) circle (1.2pt)
                  (13.5,3) circle (1.2pt)
                  (13.5,-3) circle (1.2pt);
\filldraw [black] (15.5,0.15) circle (1.2pt)
                  (15.5,-0.15) circle (1.2pt)
                  (11.5,0.15) circle (1.2pt)
                  (11.5,-0.15) circle (1.2pt);
\filldraw [black] (13.65,2) circle (1.2pt)
                  (13.65,-2) circle (1.2pt)
                  (13.35,2) circle (1.2pt)
                  (13.35,-2) circle (1.2pt);
\draw  (14.5,0)--(13.5,1)--(12.5,0)--(13.5,-1)--(14.5,0);
\draw  (16.5,0)--(13.5,3)--(10.5,0)--(13.5,-3)--(16.5,0);
\draw  (14.5,0)--(15.5,0.15)--(16.5,0)--(15.5,-0.15)--(14.5,0);
\draw  (12.5,0)--(11.5,0.15)--(10.5,0)--(11.5,-0.15)--(12.5,0);
\draw  (13.5,-1)--(13.65,-2)--(13.5,-3)--(13.35,-2)--(13.5,-1);
\draw  (13.5,1)--(13.65,2)--(13.5,3)--(13.35,2)--(13.5,1);
\draw  (13.5,-3)..controls+(0.1,0.1)and+(0.5,-1.2)..(14.5,0);
\draw  (13.5,3)..controls+(-0.01,-0.01)and+(-0.5,1.2)..(12.5,0);
\draw  (10.5,0)..controls+(0.1,-0.1)and+(-1.2,-0.5)..(13.5,-1);
\draw  (16.5,0)..controls+(-0.1,0.1)and+(1.2,0.5)..(13.5,1);
\draw  (15.5,0.4)  node { $z^2_2$};\draw  (15.45,-0.40)  node { $z^2_4$};
\draw  (11.5,0.44)  node { $z^2_1$};\draw  (11.5,-0.40)  node { $z^2_3$};
\draw  (13.25,1.1)  node { $z^2_7$};\draw  (13.75,-1)  node { $z^2_6$};
\draw  (13.24,3.1)  node { $z^2_5$};\draw  (13.24,-3.1)  node { $z^2_8$};
\draw  (13.9,2)  node { $x^1_2$};\draw  (13.10,2)  node { $x^1_1$};
\draw  (13.9,-2)  node { $x^1_4$};\draw  (13.10,-2)  node { $x^1_3$};
\draw  (14.6,0.26)  node { $x^1_5$};\draw  (12.4,-0.25)  node { $x^1_8$};
\draw  (16.5,-0.27)  node { $x^1_6$};\draw  (10.45,0.26)  node { $x^1_7$};
\end{tikzpicture}
\\ (d) The graph $G^2_2$
\end{center}
\end{figure}

\begin{figure}[H]
\begin{center}
\begin{tikzpicture}
[scale=0.7]
\tikzstyle{every node}=[font=\tiny,scale=0.7]
\tiny
[inner sep=0pt]
\filldraw [black] (0,0) circle (1.2pt)
                  (0,1.5) circle (1.2pt)
                  (0,3) circle (1.2pt);
\filldraw [black] (1.5,0) circle (1.2pt)
                  (1.5,1.5) circle (1.2pt)
                  (1.5,3) circle (1.2pt);
\draw  (0,0)--(1.5,0)--(1.5,3)--(0,3)--(0,0);
\draw  (0.25,0.25)  node { $x^1_1$};\draw  (1.3,0.25)  node { $z^2_1$};
\draw  (0.25,1.5)  node { $y^2_1$};\draw  (1.3,1.5)  node { $y^1_1$};
\draw  (0.25,2.75)  node { $z^1_1$};\draw  (1.3,2.75)  node { $x^2_1$};
\filldraw [black] (2.5,0) circle (1.2pt)
                  (2.5,1.5) circle (1.2pt)
                  (2.5,3) circle (1.2pt);
\filldraw [black] (4,0) circle (1.2pt)
                  (4,1.5) circle (1.2pt)
                  (4,3) circle (1.2pt);
\draw  (2.5,0)--(4,0)--(4,3)--(2.5,3)--(2.5,0);
\draw  (2.75,0.25)  node { $x^1_2$};\draw  (3.8,0.25)  node { $z^2_2$};
\draw  (2.75,1.5)  node { $y^2_2$};\draw  (3.8,1.5)  node { $y^1_2$};
\draw  (2.75,2.75)  node { $z^1_2$};\draw  (3.8,2.75)  node { $x^2_2$};
\filldraw [black] (5,0) circle (1.2pt)
                  (5,1.5) circle (1.2pt)
                  (5,3) circle (1.2pt);
\filldraw [black] (6.5,0) circle (1.2pt)
                  (6.5,1.5) circle (1.2pt)
                  (6.5,3) circle (1.2pt);
\draw  (5,0)--(6.5,0)--(6.5,3)--(5,3)--(5,0);
\draw  (5.25,0.25)  node { $x^1_3$};\draw  (6.3,0.25)  node { $z^2_3$};
\draw  (5.25,1.5)  node { $y^2_3$};\draw  (6.3,1.5)  node { $y^1_3$};
\draw  (5.25,2.75)  node { $z^1_3$};\draw  (6.3,2.75)  node { $x^2_3$};
\filldraw [black] (7.5,0) circle (1.2pt)
                  (7.5,1.5) circle (1.2pt)
                  (7.5,3) circle (1.2pt);
\filldraw [black] (9,0) circle (1.2pt)
                  (9,1.5) circle (1.2pt)
                  (9,3) circle (1.2pt);
\draw  (7.5,0)--(9,0)--(9,3)--(7.5,3)--(7.5,0);
\draw  (7.75,0.25)  node { $x^1_4$};\draw  (8.8,0.25)  node { $z^2_4$};
\draw  (7.75,1.5)  node { $y^2_4$};\draw  (8.8,1.5)  node { $y^1_4$};
\draw  (7.75,2.75)  node { $z^1_4$};\draw  (8.8,2.75)  node { $x^2_4$};
\filldraw [black] (10,0) circle (1.2pt)
                  (10,1.5) circle (1.2pt)
                  (10,3) circle (1.2pt);
\filldraw [black] (11.5,0) circle (1.2pt)
                  (11.5,1.5) circle (1.2pt)
                  (11.5,3) circle (1.2pt);
\draw  (10,0)--(11.5,0)--(11.5,3)--(10,3)--(10,0);
\draw  (10.25,0.25)  node { $x^1_5$};\draw  (11.3,0.25)  node { $z^2_5$};
\draw  (10.25,1.5)  node { $y^2_5$};\draw  (11.3,1.5)  node { $y^1_5$};
\draw  (10.25,2.75)  node { $z^1_5$};\draw  (11.3,2.75)  node { $x^2_5$};
\filldraw [black] (12.5,0) circle (1.2pt)
                  (12.5,1.5) circle (1.2pt)
                  (12.5,3) circle (1.2pt);
\filldraw [black] (14,0) circle (1.2pt)
                  (14,1.5) circle (1.2pt)
                  (14,3) circle (1.2pt);
\draw  (12.5,0)--(14,0)--(14,3)--(12.5,3)--(12.5,0);
\draw  (12.75,0.25)  node { $x^1_6$};\draw  (13.8,0.25)  node { $z^2_6$};
\draw  (12.75,1.5)  node { $y^2_6$};\draw  (13.8,1.5)  node { $y^1_6$};
\draw  (12.75,2.75)  node { $z^1_6$};\draw  (13.8,2.75)  node { $x^2_6$};
\filldraw [black] (15,0) circle (1.2pt)
                  (15,1.5) circle (1.2pt)
                  (15,3) circle (1.2pt);
\filldraw [black] (16.5,0) circle (1.2pt)
                  (16.5,1.5) circle (1.2pt)
                  (16.5,3) circle (1.2pt);
\draw  (15,0)--(16.5,0)--(16.5,3)--(15,3)--(15,0);
\draw  (15.25,0.25)  node { $x^1_7$};\draw  (16.3,0.25)  node { $z^2_7$};
\draw  (15.25,1.5)  node { $y^2_7$};\draw  (16.3,1.5)  node { $y^1_7$};
\draw  (15.25,2.75)  node { $z^1_7$};\draw  (16.3,2.75)  node { $x^2_7$};
\filldraw [black] (17.5,0) circle (1.2pt)
                  (17.5,1.5) circle (1.2pt)
                  (17.5,3) circle (1.2pt);
\filldraw [black] (19,0) circle (1.2pt)
                  (19,1.5) circle (1.2pt)
                  (19,3) circle (1.2pt);
\draw  (17.5,0)--(19,0)--(19,3)--(17.5,3)--(17.5,0);
\draw  (17.75,0.25)  node { $x^1_8$};\draw  (18.8,0.25)  node { $z^2_8$};
\draw  (17.75,1.5)  node { $y^2_8$};\draw  (18.8,1.5)  node { $y^1_8$};
\draw  (17.75,2.75)  node { $z^1_8$};\draw  (18.8,2.75)  node { $x^2_8$};
\end{tikzpicture}
\\ (e) The graph $G_3$
\caption{A planar decomposition of $K_{8,8,8}\times K_2$}
\label{figure 8}
\end{center}
\end{figure}

\begin{lemma}\cite{BM08}\label{le4} Let $G$ be a planar graph, and let $f$ be a face in some planar embedding of $G$. Then $G$ admits a planar embedding whose outer face has the same boundary as $f$.
\end{lemma}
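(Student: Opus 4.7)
The plan is to deduce this from the standard correspondence between planar embeddings and embeddings on the sphere $S^{2}$. Starting from the given planar embedding of $G$ in $\mathbb{R}^{2}$, I would first pass to an embedding of $G$ on the sphere via inverse stereographic projection. Under this correspondence the point at infinity lies in the interior of the unique unbounded face, and every face of the planar embedding corresponds bijectively to a face of the spherical embedding with the same boundary cycle; in particular $f$ corresponds to some face $\tilde f$ on $S^{2}$.

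Next I would pick an interior point $p$ of $\tilde f$ (which is possible since $\tilde f$ is an open region of the sphere not meeting $G$) and apply a rotation (or any orientation-preserving homeomorphism) of $S^{2}$ that carries $p$ to the north pole $N$. Since $p$ is interior to $\tilde f$ and $G$ is disjoint from $p$, after the rotation the image of $G$ is a spherical embedding that avoids a whole neighbourhood of $N$, and the image of $\tilde f$ is a face containing $N$. Composing with stereographic projection from $N$ then produces a planar embedding of $G$ in $\mathbb{R}^{2}$.

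In this new planar embedding the face containing the point at infinity, i.e.\ the outer face, is precisely the image of $\tilde f$, and because the whole procedure is a composition of homeomorphisms of $S^{2}$ (which send faces to faces and preserve boundary cycles), its boundary coincides combinatorially with the boundary of $f$.

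The only real point to check is the very last one — that the sequence of homeomorphisms preserves the \emph{boundary} of the chosen face, not merely its existence. I expect this to be the main (and essentially only) obstacle; it is handled by observing that each step (inverse stereographic projection, rotation of $S^{2}$, stereographic projection from $N$) is a homeomorphism, hence carries the closed curve $\partial f$ to a closed curve bounding the corresponding face of the image embedding, with the same cyclic sequence of vertices and edges of $G$.
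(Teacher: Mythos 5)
Your proof is correct: the paper itself gives no argument for this lemma, citing it directly from Bondy and Murty \cite{BM08}, and the stereographic-projection argument you give (lift to the sphere, rotate a point of the chosen face to the north pole, project back) is precisely the standard proof found in that reference. The final point you flag is indeed the only thing to check, and your observation that every step is a homeomorphism of the sphere carrying the embedded graph to itself settles it.
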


\begin{lemma}\label{le5} When $n=4p+1$, there exists a planar decomposition of the Kronecker product graph $K_{n,n,n}\times K_2$ with $2p+1$ subgraphs.
\end{lemma}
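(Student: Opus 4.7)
The plan is to extend the $2p+1$-subgraph planar decomposition of $K_{4p,4p,4p}\times K_2$ from Lemma \ref{le3} to a decomposition of $K_{4p+1,4p+1,4p+1}\times K_2$ with the same number of subgraphs. The new material to absorb consists of six vertices $x_{4p+1}^k,y_{4p+1}^k,z_{4p+1}^k$ ($k=1,2$) together with the $48p+6$ edges incident to them, which split into six ``new-new'' edges forming the 6-cycle $x_{4p+1}^1 y_{4p+1}^2 z_{4p+1}^1 x_{4p+1}^2 y_{4p+1}^1 z_{4p+1}^2 x_{4p+1}^1$, plus $8p$ ``new-old'' edges in each of the six bipartite components of $K_{n,n,n}\times K_2$.

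First I would adjoin the new 6-cycle to $G_{p+1}$; since $G_{p+1}$ was already a disjoint union of 6-cycles, the augmented graph stays planar. Then, in the bipartite component $G(X^1,Y^2)$, I would distribute the $8p$ new-old edges $x_{4p+1}^1 y_j^2$ and $x_j^1 y_{4p+1}^2$ ($j=1,\dots,4p$) among the subgraphs $G_1^1,\dots,G_p^1$ by assigning the block $j\in\{4r-3,4r-2,4r-1,4r\}$ to the component $G_r(X^1,Y^2)$ of $G_r^1$, and symmetrically for the other five bipartite components across $G_r^1$ and $G_r^2$. To verify planarity of each augmented $G_r(X^1,Y^2)$---which is the planar graph of Figure \ref{figure 2}(a) together with two new degree-four vertices $x_{4p+1}^1$ and $y_{4p+1}^2$ joined to the four central $Y^2$-vertices $y_{4r-3}^2,\dots,y_{4r}^2$ and to the four central $X^1$-vertices $x_{4r-3}^1,\dots,x_{4r}^1$ respectively---I would invoke Lemma \ref{le4} to choose outer faces that expose the four central $Y^2$-vertices on one face boundary and the four central $X^1$-vertices on another, then insert $x_{4p+1}^1$ and $y_{4p+1}^2$ in those faces and draw the eight new edges without crossings.

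The main obstacle is this planarity verification: the eight central vertices of $G_r$ form a fairly dense bipartite subgraph and do not all lie on a single face in the embedding of Figure \ref{figure 2}(a), so one must exhibit an explicit alternative planar drawing of $G_r$ in which two carefully chosen faces jointly contain the two target sets of four vertices each on their boundaries. This may require relocating some of the ``outer'' vertex groups around the central diamond to open up the needed faces. Once this redrawing is done uniformly for all six bipartite components across $G_1^1,\dots,G_p^1,G_1^2,\dots,G_p^2$, assembling the pieces yields a planar decomposition of $K_{4p+1,4p+1,4p+1}\times K_2$ with exactly $2p+1$ subgraphs.
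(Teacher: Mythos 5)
Your plan fails at exactly the step you flag as ``the main obstacle,'' and the obstacle is not one that a cleverer redrawing can remove. In the decomposition of Lemma \ref{l3.2}, the eight central vertices of $G_r$ (after relabelling, $x^1_{4r-3},\dots,x^1_{4r}$ and $y^2_{4r-3},\dots,y^2_{4r}$) induce $K_{4,4}$ minus a perfect matching. This graph is $3$-connected and planar, so its embedding is essentially unique, and every face of it is a quadrilateral containing exactly two vertices from each part (the paper's faces $1$, $2$, $3$ all have this form); hence no planar embedding of $G_r(X^1,Y^2)$ can expose all four central $Y^2$-vertices on one face boundary. Worse, the problem is not merely one of embeddings: the abstract graph obtained from $K_{4,4}$ minus a perfect matching by adding a new vertex $w$ adjacent to all four vertices of one part already contains a $K_{3,3}$-minor (take branch sets $\{y^2_{4r-3}\},\{y^2_{4r-2}\},\{y^2_{4r-1}\}$ against $\{w\}$, $\{x^1_{4r}\}$ and the connected set $\{x^1_{4r-3},y^2_{4r},x^1_{4r-2}\}$), so your augmented $G_r(X^1,Y^2)$, which would contain this configuration plus a second such apex $y^2_{4p+1}$, is non-planar no matter how the outer vertex groups are relocated. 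The per-block edge count in your scheme is right, but the blocks themselves cannot be made planar.

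This is precisely why the paper does something more delicate: each new vertex is joined to only \emph{two} of the four central vertices in each $G_r$-component (the two that already share a face, e.g.\ $x^1_{4p+1}$ goes into face $1$ and picks up only $y^2_{4r-3}$ and $y^2_{4r}$), so only half of the new--old edges are absorbed into $G^1_1,\dots,G^1_p,G^2_1,\dots,G^2_p$ (Steps 1--6, plus the further edge exchanges of Step 7). The remaining half --- the edges $x^1_{4p+1}y^2_i$ with $i\equiv 2,3 \pmod 4$, and so on, listed in Table \ref{tab 2} --- are routed into the last subgraph, which is therefore no longer a disjoint union of $6$-cycles but the substantially larger planar graph $\widetilde{G}_{p+1}$ of Figure \ref{figure 4}, with the six new vertices as high-degree hubs. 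If you want to salvage your approach you must likewise give up on keeping the final subgraph small and move roughly half of the $48p+6$ new edges into it.
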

\begin{proof}

{\bf Case 1.}~  When $p\leq 1$.

When $p=0$, the Kronecker product graph $K_{1,1,1}\times K_2$ is a cycle of length $6$, so $K_{1,1,1}\times K_2$ is a planar graph. When $p=1$, as shown in Figure \ref{figure 9}, we give a planar decomposition of $K_{5,5,5}\times K_2$ with three subgraphs $A,B$  and $C$.
\begin{figure}[H]
\begin{center}
\begin{tikzpicture}
[scale=0.75]
\tikzstyle{every node}=[font=\tiny,scale=0.75]
\tiny
[inner sep=0pt]
\filldraw [black] (0,0) circle (1.2pt)
                  (1.3,0) circle (1.2pt)
                  (2.7,0) circle (1.2pt)
                  (4,0) circle (1.2pt);
\filldraw [black] (2,0.7) circle (1.2pt)
                  (2,2) circle (1.2pt)
                  (2,-0.7) circle (1.2pt)
                  (2,-2) circle (1.2pt);
\draw (0,0)--(2,-2)--(4,0)--(2,2)--(0,0);
\draw (2,0.7)--(1.3,0)--(2,-0.7)--(2.7,0)--(2,0.7);
\draw (0,0)--(1.3,0);\draw (2.7,0)--(4,0);
\draw (2,0.7)--(2,2);\draw (2,-0.7)--(2,-2);
\draw (-0.2,0)  node { $z^2_1$};\draw (2.2,-0.8)  node { $z^2_2$};
\draw (2.23,0.8)  node { $z^2_3$};\draw (4.23,0)  node { $z^2_4$};
\draw (2.9,0.2)  node { $y^1_1$};\draw (2.3,2.07)  node { $y^1_2$};
\draw (2.25,-2.03)  node { $y^1_3$};\draw (1.2,0.25)  node { $y^1_4$};
\filldraw [black] (6,0) circle (1.2pt)
                  (8,-2) circle (1.2pt)
                  (8,2) circle (1.2pt)
                  (10,0) circle (1.2pt);
\filldraw [black] (8,0) circle (1.2pt)
                  (8,1) circle (1.2pt)
                  (8,-1) circle (1.2pt);
\filldraw [black] (9,0.5) circle (1.2pt)
                  (7,-0.5) circle (1.2pt);
\draw (6,0)--(8,-2);\draw(10,0)--(8,2)--(6,0);
\draw (8,2)--(8,-2);\draw (6,0)--(10,0);
\draw (6,0)--(8,-1);\draw (10,0)--(8,1);
\draw[-] (10,0)..controls+(-0.1,3.1)and+(1.5,0)..(3,2.3);
\draw[-] (3,2.3)..controls+(-4.5,-0.01)and+(-4.5,0.1)..(3,-2.3);
\draw[-] (3,-2.3)..controls+(2.5,-0.1)and+(-2,-0.2)..(8,-2);
\draw (8.2,-1.2)  node { $y^2_1$};\draw (10.22,0)  node { $y^2_2$};
\draw (5.78,0)  node { $y^2_3$};\draw (7.8,1)  node { $y^2_4$};
\draw (7.55,2)  node { $x^1_1$};\draw (7,-0.25)  node { $x^1_2$};
\draw (9,0.25)  node { $x^1_3$};\draw (8.2,-2.15)  node { $x^1_4$};
\filldraw [black] (11,0) circle (1.2pt)
                  (13,-2) circle (1.2pt)
                  (13,2) circle (1.2pt)
                  (15,0) circle (1.2pt);
\filldraw [black] (13,0) circle (1.2pt)
                  (13,1) circle (1.2pt)
                  (13,-1) circle (1.2pt);
\filldraw [black] (14,0.5) circle (1.2pt)
                  (12,-0.5) circle (1.2pt);
\draw(13,-2)--(15,0)--(13,2)--(11,0);
\draw (13,2)--(13,-2);\draw (13,0)--(15,0);
\draw (11,0)--(13,-1);\draw (15,0)--(13,1);
\draw[-] (11,0)..controls+(0.6,1)and+(-0.1,-0.1)..(12.4,2);
\draw[-] (13,-2)..controls+(1,0.6)and+(-0.1,-0.1)..(15,-0.4);
\draw[-] (12.4,2)..controls+(0.9,1)and+(1,0.9)..(15,-0.4);
\draw (13.2,1.12)  node { $z^1_1$};\draw (10.82,0)  node { $z^1_2$};
\draw (15.05,0.22)  node { $z^1_3$};\draw (13.2,-1)  node { $z^1_4$};
\draw (12.8,-2)  node { $x^2_1$};\draw (14,0.25)  node { $x^2_2$};
\draw (12,-0.78)  node { $x^2_3$};\draw (12.67,2)  node { $x^2_4$};
\filldraw [black] (5,0) circle (1.2pt);
\draw (4.95,-0.25)  node { $z^2_5$};
\draw (5,0)--(2,2);\draw (5,0)--(2,-2);
\draw (5,0)--(8,2);\draw (5,0)--(8,-2);
\filldraw [black] (12.4,0.5) circle (1.2pt);
\draw (12.17,0.5)  node { $y^1_5$};
\draw (13,2)--(12,-0.5);\draw (12.4,0.5)--(13,0);
\filldraw [black] (10.5,-0.68) circle (1.2pt);
\draw (10.5,-0.44)  node { $z^1_5$};
\draw (8,-1)--(12,-0.5);
\filldraw [black] (10.5,-1.5) circle (1.2pt);
\draw (10.4,-1.74)  node { $y^2_5$};
\draw (8,-2)--(13,-1);\draw (10.5,-0.68)--(10.5,-1.5);
\draw (8.2,-0.22)  node { $x^1_5$};
\draw (13.2,-0.22)  node { $x^2_5$};
\draw[-] (11,0)..controls+(0.6,1.3)and+(-0.2,-0.2)..(12.2,2);
\draw[-] (10.5,-1.5)..controls+(3,-1.9)and+(-0.1,-1)..(15.3,-0.1);
\draw[-] (12.2,2)..controls+(0.9,1.1)and+(0.3,1.8)..(15.3,-0.1);
\end{tikzpicture}
\\ (a) The graph $A$
\end{center}
\end{figure}
\begin{figure}[H]
\begin{center}
\begin{tikzpicture}
[scale=0.75]
\tikzstyle{every node}=[font=\tiny,scale=0.75]
\tiny
[inner sep=0pt]
\filldraw [black] (0,0) circle (1.2pt)
                  (1.3,0) circle (1.2pt)
                  (2.7,0) circle (1.2pt)
                  (4,0) circle (1.2pt);
\filldraw [black] (2,0.7) circle (1.2pt)
                  (2,2) circle (1.2pt)
                  (2,-0.7) circle (1.2pt)
                  (2,-2) circle (1.2pt);
\draw (0,0)--(2,-2)--(4,0)--(2,2)--(0,0);
\draw (2,0.7)--(1.3,0)--(2,-0.7)--(2.7,0)--(2,0.7);
\draw (0,0)--(1.3,0);\draw (2.7,0)--(4,0);
\draw (2,0.7)--(2,2);\draw (2,-0.7)--(2,-2);
\draw (-0.2,0)  node { $z^1_1$};\draw (2.2,-0.8)  node { $z^1_2$};
\draw (2.23,0.8)  node { $z^1_3$};\draw (4.23,0)  node { $z^1_4$};
\draw (2.9,0.2)  node { $y^2_1$};\draw (2.3,2.07)  node { $y^2_2$};
\draw (2.25,-2.03)  node { $y^2_3$};\draw (1.2,0.25)  node { $y^2_4$};
\filldraw [black] (6,0) circle (1.2pt)
                  (8,-2) circle (1.2pt)
                  (8,2) circle (1.2pt)
                  (10,0) circle (1.2pt);
\filldraw [black] (8,0) circle (1.2pt)
                  (8,1) circle (1.2pt)
                  (8,-1) circle (1.2pt);
\filldraw [black] (9,0.5) circle (1.2pt)
                  (7,-0.5) circle (1.2pt);
\draw (6,0)--(8,-2);\draw(10,0)--(8,2)--(6,0);
\draw (8,2)--(8,-2);\draw (6,0)--(10,0);
\draw (6,0)--(8,-1);\draw (10,0)--(8,1);
\draw[-] (10,0)..controls+(-0.1,3.1)and+(1.5,0)..(3,2.3);
\draw[-] (3,2.3)..controls+(-4.5,-0.01)and+(-4.5,0.1)..(3,-2.3);
\draw[-] (3,-2.3)..controls+(2.5,-0.1)and+(-2,-0.2)..(8,-2);
\draw (8.2,-1.2)  node { $y^1_4$};\draw (10.22,0)  node { $y^1_3$};
\draw (5.78,0)  node { $y^1_2$};\draw (7.8,1)  node { $y^1_1$};
\draw (7.55,2)  node { $x^2_4$};\draw (7,-0.25)  node { $x^2_3$};
\draw (9,0.25)  node { $x^2_2$};\draw (8.2,-2.15)  node { $x^2_1$};
\filldraw [black] (11,0) circle (1.2pt)
                  (13,-2) circle (1.2pt)
                  (13,2) circle (1.2pt)
                  (15,0) circle (1.2pt);
\filldraw [black] (13,0) circle (1.2pt)
                  (13,1) circle (1.2pt)
                  (13,-1) circle (1.2pt);
\filldraw [black] (14,0.5) circle (1.2pt)
                  (12,-0.5) circle (1.2pt);
\draw(13,-2)--(15,0)--(13,2)--(11,0);
\draw (13,2)--(13,-2);\draw (13,0)--(15,0);
\draw (11,0)--(13,-1);\draw (15,0)--(13,1);
\draw[-] (11,0)..controls+(0.6,1)and+(-0.1,-0.1)..(12.4,2);
\draw[-] (13,-2)..controls+(1,0.6)and+(-0.1,-0.1)..(15,-0.4);
\draw[-] (12.4,2)..controls+(0.9,1)and+(1,0.9)..(15,-0.4);
\draw (13.2,1.12)  node { $z^2_4$};\draw (10.82,0)  node { $z^2_3$};
\draw (15.05,0.22)  node { $z^2_2$};\draw (13.2,-1)  node { $z^2_1$};
\draw (12.8,-2)  node { $x^1_4$};\draw (14,0.25)  node { $x^1_3$};
\draw (12,-0.78)  node { $x^1_2$};\draw (12.67,2)  node { $x^1_1$};
\filldraw [black] (5,0) circle (1.2pt);
\draw (4.95,-0.25)  node { $z^1_5$};
\draw (5,0)--(2,2);\draw (5,0)--(2,-2);
\draw (5,0)--(8,2);\draw (5,0)--(8,-2);
\filldraw [black] (12.4,0.5) circle (1.2pt);
\draw (12.17,0.5)  node { $y^2_5$};
\draw (13,2)--(12,-0.5);\draw (12.4,0.5)--(13,0);
\filldraw [black] (10.5,-0.68) circle (1.2pt);
\draw (10.5,-0.44)  node { $z^2_5$};
\draw (8,-1)--(12,-0.5);
\filldraw [black] (10.5,-1.5) circle (1.2pt);
\draw (10.4,-1.74)  node { $y^1_5$};
\draw (8,-2)--(13,-1);\draw (10.5,-0.68)--(10.5,-1.5);
\draw (8.2,-0.22)  node { $x^2_5$};
\draw (13.2,-0.22)  node { $x^1_5$};
\draw[-] (11,0)..controls+(0.6,1.3)and+(-0.2,-0.2)..(12.2,2);
\draw[-] (10.5,-1.5)..controls+(3,-1.9)and+(-0.1,-1)..(15.3,-0.1);
\draw[-] (12.2,2)..controls+(0.9,1.1)and+(0.3,1.8)..(15.3,-0.1);
\end{tikzpicture}
\\ (b) The graph $B$
\end{center}
\end{figure}
\begin{figure}[H]
\begin{center}
\begin{tikzpicture}
[scale=0.75]
\tikzstyle{every node}=[font=\tiny,scale=0.75]
\tiny
[inner sep=0pt]
\filldraw [black] (0,0) circle (1.2pt)
                  (1,0) circle (1.2pt)
                  (2,0) circle (1.2pt)
                  (3,0) circle (1.2pt)
                  (4,0) circle (1.2pt)
                  (5,0) circle (1.2pt)
                  (6,0) circle (1.2pt);
\filldraw [black] (0,2) circle (1.2pt)
                  (1,2) circle (1.2pt)
                  (2,2) circle (1.2pt)
                  (3,2) circle (1.2pt)
                  (4,2) circle (1.2pt)
                  (5,2) circle (1.2pt)
                  (6,2) circle (1.2pt);
\filldraw [black] (2,1.33) circle (1.2pt);
\draw (0,0)--(6,0); \draw (0,2)--(6,2); \draw (0,0)--(0,2);\draw (6,0)--(6,2);                                                  \draw (0,0)--(2,2);\draw (3,2)--(3,0)--(5,2);
\draw (0,0)--(3,2);\draw (2,0)--(2,1.33);
\draw[-] (0,0)..controls+(1,-0.3)and+(-1,-0.3)..(5,0);
\draw[-] (0,2)..controls+(1,0.3)and+(-1,0.3)..(5,2);
\draw[-] (0,0)..controls+(0.2,-0.3)and+(-1,-0.01)..(5,-0.4);
\draw[-] (5,-0.4)..controls+(1.5,0.01)and+(0.5,-1.7)..(6,2);
\draw (-0.2,0.25)  node { $x^1_3$};
\draw (1,0.25)  node { $y^2_3$};
\draw (1.8,0.25)  node { $z^1_3$};
\draw (2.8,0.25)  node { $x^2_3$};
\draw (4,0.25)  node { $y^1_3$};
\draw (5,0.25)  node { $z^2_3$};
\draw (5.8,0.25)  node { $x^1_5$};
\draw (-0.2,1.75)  node { $z^2_1$};
\draw (1,1.75)  node { $x^1_1$};
\draw (2.05,1.75)  node { $y^2_1$};
\draw (3.2,1.75)  node { $z^1_1$};
\draw (4,1.75)  node { $x^2_1$};
\draw (5.05,1.75)  node { $y^1_1$};
\draw (5.8,1.75)  node { $z^2_5$};
\draw (2.2,1.25)  node { $y^2_5$};
\filldraw [black] (7,0) circle (1.2pt)
                  (8,0) circle (1.2pt)
                  (9,0) circle (1.2pt)
                  (10,0) circle (1.2pt)
                  (11,0) circle (1.2pt)
                  (12,0) circle (1.2pt)
                  (13,0) circle (1.2pt);
\filldraw [black] (7,2) circle (1.2pt)
                  (8,2) circle (1.2pt)
                  (9,2) circle (1.2pt)
                  (10,2) circle (1.2pt)
                  (11,2) circle (1.2pt)
                  (12,2) circle (1.2pt)
                  (13,2) circle (1.2pt);
\filldraw [black] (9,1.33) circle (1.2pt);
\draw (7,0)--(13,0); \draw (7,2)--(13,2); \draw (7,0)--(7,2);\draw (13,0)--(13,2);                                                  \draw (7,0)--(9,2);\draw (10,2)--(10,0)--(12,2);
\draw (7,0)--(10,2);\draw (9,0)--(9,1.33);
\draw[-] (7,0)..controls+(1,-0.3)and+(-1,-0.3)..(12,0);
\draw[-] (7,2)..controls+(1,0.3)and+(-1,0.3)..(12,2);
\draw[-] (7,0)..controls+(0.2,-0.3)and+(-1,-0.01)..(12,-0.4);
\draw[-] (12,-0.4)..controls+(1.5,0.01)and+(0.5,-1.7)..(13,2);
\draw (6.8,0.25)  node { $x^2_2$};
\draw (8,0.25)  node { $y^1_2$};
\draw (8.8,0.25)  node { $z^2_2$};
\draw (9.8,0.25)  node { $x^1_2$};
\draw (11,0.25)  node { $y^2_2$};
\draw (12,0.25)  node { $z^1_2$};
\draw (12.8,0.25)  node { $x^2_5$};
\draw (6.8,1.75)  node { $z^1_4$};
\draw (8,1.75)  node { $x^2_4$};
\draw (9.05,1.75)  node { $y^1_4$};
\draw (10.2,1.75)  node { $z^2_4$};
\draw (11,1.75)  node { $x^1_4$};
\draw (12.05,1.75)  node { $y^2_4$};
\draw (12.8,1.75)  node { $z^1_5$};
\draw (9.2,1.25)  node { $y^1_5$};
\end{tikzpicture}
\\ (c) The graph $C$
\caption{A planar decomposition of $K_{5,5,5}\times K_2$}
\label{figure 9}
\end{center}
\end{figure}

{\bf Case 2.}~  When $p\geq 2$.

Suppose that $\{G^1_1,\dots,G^1_p,G^2_1,\dots,G^2_p,G_{p+1}\}$ is the planar decomposition of $K_{4p,4p,4p}\times K_2$ as provided in the proof of Lemma \ref{le3}. By adding vertices $x^1_{4p+1}$, $x^2_{4p+1}$, $y^1_{4p+1}$, $y^2_{4p+1}$, $z^1_{4p+1}$, $z^2_{4p+1}$ to each graph in this decomposition, and some modifications of adding and deleting edges to these graphs, a planar decomposition of $K_{4p+1,4p+1,4p+1}\times K_2$ will be obtained.

 For convenience, in Figure \ref{figure 2} we label some faces of $G_r (1\leq r\leq p)$ with face $1, 2$ and $3$.  As indicated in Figure \ref{figure 2}, the face $1$ is bounded by $v_{4r-1}u_{4r-3}v_{4r-2}u_{4r}$, the face $3$ is its outer face, bounded by $v_{4r-3}u_{4r-2}v_{4r}u_{4r-1}$. The face 2 is bounded by $u_{4r-3}v_{4r-1}u_{4r-2}v_{j}$ in which vertex $v_j$ can be any vertex of $\bigcup\limits^p_{i=1,i\neq r}\{v_{4i-2},v_{4i}\}$. Because $u_{4r-3}$ and $u_{4r-2}$ in $G_{r}$ $(1\leq r\leq p)$ is joined by $2p-2$ edge-disjoint paths of length two that we call parallel paths, we can change the order of these parallel paths without changing the planarity of $G_{r}$. Analogously, we can change the order of parallel paths between $u_{4r-1}$ and $u_{4r}$, $v_{4r-3}$ and $v_{4r-1}$, $v_{4r-2}$ and $v_{4r}$. In addition, the subscripts of all the vertices are taken module $4p$, except that of the new added vertices $x^1_{4p+1}$, $x^2_{4p+1}$, $y^1_{4p+1}$, $y^2_{4p+1}$, $z^1_{4p+1}$ and  $z^2_{4p+1}$.

\noindent {\bf Step 1:}~~Add the vertices $x^1_{4p+1}$ and $y^2_{4p+1}$ to graph $G_r(X^1,Y^2)$.

Place vertices $x^1_{4p+1}$ and $y^2_{4p+1}$ in face $1$ and face $2$ of $G_r(X^1,Y^2)$, respectively. Join $x^1_{4p+1}$ to vertices $y^2_{4r-3}$ and $y^2_{4r}$. Change the order of the parallel paths between $y_{4r-2}^2$ and $y_{4r-3}^2$, such that $x^1_{4r+2}\in \bigcup\limits^p_{i=1,i\neq r}\{x^1_{4i-2},x^1_{4i}\}$ are incident with the face $2$, and join $y^2_{4p+1}$ to both $x^1_{4r-1}$ and $x^1_{4r+2}$.

\noindent {\bf Step 2:}~~Add the vertices $x^2_{4p+1}$ and $y^1_{4p+1}$ to graph $G_r(X^2,Y^1)$.

Similar to step $1$, place $x^2_{4p+1}$ and $y^1_{4p+1}$ in face $1$ and face $2$ of $G_r(X^2,Y^1)$, respectively. Join $x^2_{4p+1}$ to both $y^1_{4r-3}$ and $y^1_{4r}$, join $y^1_{4p+1}$ to both $x^2_{4r-1}$ and $x^2_{4r+2}\in \bigcup\limits^p_{i=1,i\neq r}\{x^2_{4i-2},x^2_{4i}\}$.

\noindent {\bf Step 3:}~~Add the vertices $y^1_{4p+1}$ and $z^2_{4p+1}$ to graph $G_r(Y^1,Z^2)$.

Place $y^1_{4p+1}$ in face $3$ of $G_r(Y^1,Z^2)$ and join it to vertices $z^2_{4r-2}$ and $z^2_{4r-1}$.
Place $z^2_{4p+1}$ in face $1$ of $G_r(Y^1,Z^2)$ and join it to vertices $y^1_{4r-2}$ and $y^1_{4r-1}$.

\noindent {\bf Step 4:}~~Add the vertices $y^2_{4p+1}$ and $z^1_{4p+1}$ to graph $G_r(Y^2,Z^1)$.

Place $y^2_{4p+1}$ in face $3$ of $G_r(Y^2,Z^1)$ and join it to vertices $z^1_{4r-2}$ and $z^1_{4r-1}$.
Place $z^1_{4p+1}$ in face $1$ of $G_r(Y^2,Z^1)$ and join it to vertices $y^2_{4r-2}$ and $y^2_{4r-1}$.

\noindent {\bf Step 5:}~~ Add the vertices $z^1_{4p+1}$ and $x^2_{4p+1}$ to graph $G_r(Z^1,X^2)$.

Place $z^1_{4p+1}$ in face $1$ of $G_r(Z^1,X^2)$ and join it to vertices $x^2_{4r-3}$ and $x^2_{4r}$.
Place $x^2_{4p+1}$ in face $3$ of $G_r(Z^1,X^2)$ and join it to vertices $z^1_{4r-3}$ and $z^1_{4r}$.

\noindent {\bf Step 6:}~~ Add the vertices $z^2_{4p+1}$ and $x^1_{4p+1}$ to graph $G_r(Z^2,X^1)$.

Place $z^2_{4p+1}$ in face $1$ of $G_r(Z^2,X^1)$ and join it to vertices $x^1_{4r-3}$ and $x^1_{4r}$.
Place $x^1_{4p+1}$ in face $3$ of $G_r(Z^2,X^1)$ and join it to vertices $z^2_{4r-3}$ and $z^2_{4r}$.

We denote the above graphs we obtain from Steps $1-6$ by $\widehat{G}_r(X^1,Y^2)$, $\widehat{G}_r(X^2,Y^1)$, $\widehat{G}_r(Y^1,Z^2)$, $\widehat{G}_r(Y^2,Z^1)$, $\widehat{G}_r(Z^1,X^2)$ and $\widehat{G}_r(Z^2,X^1)$ respectively.

Let $$\widehat{G}^1_r=\widehat{G}_r(X^1,Y^2)\cup \widehat{G}_r(Y^1,Z^2)\cup \widehat{G}_r(Z^1,X^2)$$
and $$\widehat{G}^2_{r}=\widehat{G}_{r}(X^2,Y^1)\cup \widehat{G}_r(Y^2,Z^1)\cup \widehat{G}_r(Z^2,X^1).$$

\noindent {\bf Step 7:}~~ Add the edges
$z^1_{4r}x^2_{4r}$, $y^1_{4r-1}z^2_{4r-1}$, $z^1_{4r-2}y^2_{4r-2}$, $x^1_{4r-3}z^2_{4r-3}$ and
$z^2_{4r}x^1_{4r}$, $y^2_{4r-1}z^1_{4r-1}$, $z^2_{4r-2}y^1_{4r-2}$, $x^2_{4r-3}z^1_{4r-3}$ to graphs $\widehat{G}^1_r$ and $\widehat{G}^2_r$ respectively, $1\leq r\leq p$.

For graph $\widehat{G}_r(Y^1,Z^2)\subset \widehat{G}^1_r$,  we delete the edge $y^1_{4r-3}z^2_{4r}$ and join the vertex $y^1_{4r-1}$ to vertex $z^2_{4r-1}$, then we get a planar graph $\widetilde{G}_r(Y^1,Z^2)$. According to Lemma \ref{le4}, the graph $\widetilde{G}_r(Y^1,Z^2)$ has a planar embedding whose outer face has the same boundary as face $2$, then the vertex $z^2_{4r-3}$ is on the boundary of this outer face.

For graph $\widehat{G}_r(Z^1,X^2)\subset \widehat{G}^1_r$, delete the edge $z^1_{4r-2}x^2_{4r-1}$ and join $z^1_{4r}$ to $x^2_{4r}$, then we get a planar graph $\widetilde{G}_r(Z^1,X^2)$. According to Lemma \ref{le4}, the graph $\widetilde{G}_r(Z^1,X^2)$ has a planar embedding whose outer face has boundary as \\$z^1_{4r}x^2_{4r}z^1_{4r-2}x^2_{i}z^1_{4r}~(x^2_{i}\in \bigcup\limits^p_{i=1,i\neq r}\{x_{4i-1}^2,x_{4i}^2\})$, then the vertex $z^1_{4r-2}$ is on the boundary of this outer face.

Since the vertices $x^1_{4r-3}$ and  $y^2_{4r-2}$ are on the boundary of the outer face of the embedding of  $\widehat{G}_r(X^1,Y^2)\subset \widehat{G}^1_r$, we can join $x^1_{4r-3}$ to $z^2_{4r-3}$, $y^2_{4r-2}$ to $z^1_{4r-2}$ without edge crossing. Then we get a planar graph $\widetilde{G}^1_r$.

With the same process, for the graph ${G}^2_r$, we delete edges $y^2_{4r-3}z^1_{4r}$ and $z^2_{4r-2}x^1_{4r-1}$, join $y^2_{4r-1}$ to $z^1_{4r-1}$, join $z^2_{4r}$ to  $x^1_{4r}$, join $x^2_{4r-3}$ to $z^1_{4r-3}$ and join $y^1_{4r-2}$ to $z^2_{4r-2}$, then we get a planar graph $\widetilde{G}^2_r$.

Table \ref{tab 1} shows the edges that we add to $G^1_{r}$ and $G^2_{r}$ $(1\leq r\leq p)$ in Steps $1-7$.

\begin{table}[H]
\centering\caption {The edges we add to $G^1_{r}$ and $G^2_{r}$ $(1\leq r\leq p)$ }\label{tab 1}\tiny
\renewcommand{\arraystretch}{2.5}
\begin{tabular}{|*{9}{c|}}
\hline
\multicolumn{8}{|c|}{edges} &  \multicolumn{1}{c|}{subscript} \\
\hline
\multicolumn{2}{|c|}{$x^1_{4p+1}y^2_i,x^2_{4p+1}y^1_i$}   &
\multicolumn{2}{c|}{$z^1_{4p+1}x^2_i,z^2_{4p+1}x^1_i$ }   &
\multicolumn{2}{c|}{$x^1_{4p+1}z^2_i,x^2_{4p+1}z^1_i$}    &
\multicolumn{2}{c|}{$x^1_{i}z^2_i,x^2_{i}z^1_i$ }  &{$i=4r-3,4r$.}      \\
\hline
\multicolumn{2}{|c|}{$y^1_{4p+1}z^2_i,y^2_{4p+1}z^1_i$}   & \multicolumn{2}{c|}{$z^1_{4p+1}y^2_i,z^2_{4p+1}y^1_i$ }&
\multicolumn{2}{c|}{$y^1_{4p+1}x^2_i,y^2_{4p+1}x^1_i$}   & \multicolumn{2}{c|}{ $y^1_{i}z^2_i,y^2_{i}z^1_i$ }   & {$i=4r-2,4r-1$.}      \\
\hline
\end{tabular}
\end{table}

\noindent {\bf Step 8:}~~ The remaining edges form a planar graph $\widetilde{G}_{p+1}.$

The edges that belong to $K_{4p+1,4p+1,4p+1}\times K_2$ but not to any
$\widetilde{G}^1_r, \widetilde{G}^2_r$ $(1\leq r\leq p)$ are shown in Table \ref{tab 2}, in which the edges in the last two rows list the edges deleted in Step $7$. The remaining edges form a graph, denote by $\widetilde{G}_{p+1}$. We draw a planar embedding of $\widetilde{G}_{p+1}$ in Figure \ref{figure 4}, so $\widetilde{G}_{p+1}$ is a planar graph.

Therefore $\{\widetilde{G}^1_1,\dots,\widetilde{G}^1_{p},\widetilde{G}^2_1,\dots,\widetilde{G}^2_{p},\widetilde{G}_{p+1}\}$ is a planar decomposition of \\$K_{4p+1,4p+1,4p+1}\times K_2$, the Lemma follows.
\end{proof}

Figure \ref{figure 5} illustrates a planar decomposition of $K_{9,9,9}\times K_2$ with five subgraphs.

\begin{table}[H]
\centering\caption {The edges of $\widetilde{G}_{p+1}$} \label{tab 2}
\tiny
\renewcommand{\arraystretch}{2.5}

\\ (e) The graph $\widetilde{G}_3$
\caption{A planar decomposition of $K_{9,9,9}\times K_2$}
\label{figure 5}
\end{center}
\end{figure}

A graph $G$ is said to be thickness {\it $t$-minimal}, if $\theta(G)=t$ and every proper subgraphs of it have a thickness less than $t$.

\begin{lemma}\label{le6} When $n=4p+3$, there exists a planar decomposition of Kronecker product graph $K_{4p+3,4p+3,4p+3}\times K_2$ with $2p+2$ subgraphs.
\end{lemma}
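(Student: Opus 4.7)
The plan is to adapt the construction of Lemma~\ref{le5} (the $n=4p+1$ case), extending the planar decomposition obtained there by incorporating two additional vertices per vertex class. Specifically, I would start from the planar decomposition $\{\widetilde{G}^1_1,\dots,\widetilde{G}^1_p,\widetilde{G}^2_1,\dots,\widetilde{G}^2_p,\widetilde{G}_{p+1}\}$ of $K_{4p+1,4p+1,4p+1}\times K_2$ and add the twelve new vertices $v^k_{4p+2},v^k_{4p+3}$ for $v\in\{x,y,z\}$ and $k\in\{1,2\}$. Since $\theta(K_{4p+3,4p+3})=p+2$ while $\theta(K_{4p+1,4p+1})=p+1$, one extra planar subgraph is expected to be necessary and, modulo the lower bound from Theorem~\ref{th3}, also sufficient: the goal is to build one new planar subgraph $\widetilde{G}_{p+2}$ that, together with the (possibly modified) $2p+1$ old subgraphs, gives a decomposition with exactly $2p+2$ pieces.

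First, I would handle the vertices indexed $4p+2$. Imitating Steps 1--7 from Lemma~\ref{le5}, I would place each $v^k_{4p+2}$ inside a designated face of the appropriate $\widehat{G}_r$-type pieces of $\widetilde{G}^1_r$ and $\widetilde{G}^2_r$ ($1\le r\le p$), exploiting the freedom to permute parallel paths in the $G_r$'s of Figure~\ref{figure 2} to expose the right face-boundaries. These local modifications absorb some of the new edges into the existing subgraphs without destroying planarity, yielding modified subgraphs $\widetilde{G}^{1,\prime}_r,\widetilde{G}^{2,\prime}_r$ still of cardinality $2p$.

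Next, I would collect every remaining edge of $K_{4p+3,4p+3,4p+3}\times K_2$ --- those incident to $v^k_{4p+3}$, together with any edges deleted from $\widetilde{G}_{p+1}$ or $\widetilde{G}^k_r$ during the modification step and all edges between $v^k_{4p+2}$ and $v^k_{4p+3}$-type vertices that could not be absorbed above --- into a single candidate subgraph $\widetilde{G}_{p+2}$. In a construction analogous to Figure~\ref{figure 4}, the six vertices $v^k_{4p+3}$ would serve as central hubs arranged on a vertical spine, with the remaining $6(4p+2)$ vertices distributed on horizontal levels so that every required edge can be drawn as an arc around the spine without crossings. Together with the already-adjusted $\widetilde{G}_{p+1}$, this produces the final decomposition $\{\widetilde{G}^{1,\prime}_1,\ldots,\widetilde{G}^{1,\prime}_p,\widetilde{G}^{2,\prime}_1,\ldots,\widetilde{G}^{2,\prime}_p,\widetilde{G}_{p+1},\widetilde{G}_{p+2}\}$ with $2p+2$ subgraphs.

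The main obstacle is twofold: first, the careful bookkeeping to guarantee that every edge of $K_{4p+3,4p+3,4p+3}\times K_2$ lies in exactly one of the $2p+2$ subgraphs, which will require tables analogous to Tables~\ref{tab 1} and~\ref{tab 2}; second, and harder, verifying planarity of $\widetilde{G}_{p+2}$ by exhibiting an explicit embedding. Since $\widetilde{G}_{p+2}$ resembles six interlinked copies of $K_{2,4p+2}$ plus a few short cycles through the hubs, a spine-and-arc layout in the spirit of Figure~\ref{figure 4} should work, but as in Lemma~\ref{le5} the small cases $p=0,1$ likely need to be treated separately with ad hoc embeddings (analogous to Figure~\ref{figure 9}).
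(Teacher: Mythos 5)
Your outline diverges from the paper's proof and, more importantly, leaves the essential difficulty unresolved. Passing from $K_{4p+1,4p+1,4p+1}\times K_2$ to $K_{4p+3,4p+3,4p+3}\times K_2$ introduces $6(4p+3)^2-6(4p+1)^2=96p+48$ new edges, while the single additional subgraph $\widetilde{G}_{p+2}$ you propose is a bipartite planar graph on at most $6(4p+3)=24p+18$ vertices and hence can carry at most $2(24p+18)-4=48p+32$ edges. So at least $48p+16$ of the new edges must be pushed into the $2p+1$ old subgraphs. But each component $G_r(\cdot\,,\cdot)$ of each $\widetilde{G}^k_r$ is already an edge-maximal bipartite planar graph (a quadrangulation: $8p$ vertices and $16p-4$ edges before the modifications of Lemma \ref{le5}, and $8p+2$ vertices with $16p$ edges after its Steps 1--6), so a new vertex placed in a face of it can acquire at most two neighbours there. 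Imitating Steps 1--6 of Lemma \ref{le5} therefore absorbs only on the order of $24$ edges per $\widetilde{G}^k_r$, i.e.\ roughly $48p$ edges in total, leaving about $48p+48>48p+32$ edges for $\widetilde{G}_{p+2}$ --- already over capacity before any question of exhibiting an actual embedding. Making your plan work would force essentially every insertion to be extremal, every modified component to be re-completed to a quadrangulation, and $\widetilde{G}_{p+1}$ and $\widetilde{G}_{p+2}$ to be driven to within a handful of edges of the bipartite planar maximum; none of this is constructed or verified in your proposal, and the phrase ``collect every remaining edge into a single candidate subgraph'' is precisely where the argument breaks.

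The paper avoids this tightrope entirely by abandoning the $G_r$-based construction for $p\geq 1$. It invokes the Hobbs--Grossman and Bouwer--Broere planar decompositions $\{H_1,\dots,H_{p+2}\}$ of the thickness-$(p+2)$-minimal graph $K_{4p+3,4p+3}$, in which $H_{p+2}$ consists of a single edge $v_au_b$. Applying this to each of the six pairwise disjoint copies of $K_{n,n}$ making up $G^1\cup G^2$ yields $2(p+2)$ planar subgraphs $H^1_i,H^2_i$, of which $H^1_{p+2}$ and $H^2_{p+2}$ contain only three edges each; these six edges are then absorbed into $\widetilde{H}^1_1,\widetilde{H}^1_2,\widetilde{H}^2_1,\widetilde{H}^2_2$ by re-choosing outer faces via Lemma \ref{le4}, giving exactly $2p+2$ subgraphs with no delicate edge counting at all. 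Only $p=0$ is handled by an ad hoc figure. If you want to salvage your approach you would need to carry out the full bookkeeping (analogues of Tables \ref{tab 1} and \ref{tab 2}) and exhibit the embedding of $\widetilde{G}_{p+2}$ explicitly; as written, the claim that the leftover graph is planar is not merely unproved but numerically implausible.
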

\begin{proof}

{\bf Case 1.} When $p=0$.

As shown in Figure \ref{figure 10}, we give a planar decomposition of $K_{3,3,3}\times K_2$  with $2$ subgraphs.

\begin{figure}[H]
\begin{center}
\begin{tikzpicture}
[scale=0.8]
\tikzstyle{every node}=[font=\tiny,scale=0.75]
\tiny
[inner sep=0pt]
\filldraw [black] (0,0) circle (1.2pt)
                  (2,0) circle (1.2pt)
                  (4,0) circle (1.2pt)
                  (6,0) circle (1.2pt)
                  (8,0) circle (1.2pt)
                  (10,0) circle (1.2pt);
\filldraw [black] (0,1) circle (1.2pt)
                  (2,1) circle (1.2pt)
                  (4,1) circle (1.2pt)
                  (6,1) circle (1.2pt)
                  (8,1) circle (1.2pt)
                  (10,1) circle (1.2pt);
\filldraw [black] (0,2) circle (1.2pt)
                  (2,2) circle (1.2pt)
                  (4,2) circle (1.2pt)
                  (6,2) circle (1.2pt)
                  (8,2) circle (1.2pt)
                  (10,2) circle (1.2pt);
\draw (0,2)--(2,0);\draw (2,2)--(4,0);\draw (4,2)--(6,0);\draw (6,2)--(8,0);\draw (8,2)--(10,0);
\draw (0,2)--(0,0);\draw (2,2)--(2,0);\draw (4,2)--(4,0);\draw (6,2)--(6,0);\draw (8,2)--(8,0);
\draw (10,2)--(10,0);
\draw (0,0)--(10,0);\draw (0,2)--(10,2);
\draw[-] (0,2)..controls+(1,1)and+(-1,1)..(10,2);
\draw[-] (0,0)..controls+(1,-0.4)and+(-1,-1)..(10,0);
\draw[-] (0,0)..controls+(2,-1)and+(-2,-1.2)..(10.6,-0.2);
\draw[-] (10.6,-0.2)..controls+(1,0.7)and+(0.5,-0.5)..(10,2);
\draw (0.2,0.22)  node { $z^1_{3}$};\draw (2.2,0.22)  node { $y^2_{3}$};
\draw (4.2,0.22)  node { $x^1_{3}$};\draw (6.2,0.22)  node { $z^2_{3}$};
\draw (8.2,0.22)  node { $y^1_{3}$};\draw (10.22,0.22)  node { $x^2_{3}$};
\draw (0.25,1)  node { $y^2_{2}$};\draw (2.25,1)  node { $x^1_{2}$};
\draw (4.25,1)  node { $z^2_{2}$};\draw (6.25,1)  node { $y^1_{2}$};
\draw (8.25,1)  node { $x^2_{2}$};\draw (10.25,1)  node { $z^1_{2}$};
\draw (-0.2,1.85)  node { $x^1_{1}$};\draw (1.8,1.75)  node { $z^2_{1}$};
\draw (3.8,1.75)  node { $y^1_{1}$};\draw (5.8,1.75)  node { $x^2_{1}$};
\draw (7.8,1.75)  node { $z^1_{1}$};\draw (9.8,1.75)  node { $y^2_{1}$};
\end{tikzpicture}
\end{center}
\end{figure}
\begin{figure}[H]
\begin{center}
\begin{tikzpicture}
[xscale=0.8]
\tikzstyle{every node}=[font=\tiny,scale=0.8]
[inner sep=0pt]\tiny
\filldraw [black] (0,2) circle (1.2pt)
                     (1,2) circle (1.2pt)
                      (2,2) circle (1.2pt);
\filldraw [black](3,2) circle (1.2pt)
                     (4,2) circle (1.2pt)
                      (5,2) circle (1.2pt);
\filldraw [black](6,2) circle (1.2pt)
                     (7,2) circle (1.2pt)
                      (7,1) circle (1.2pt);
\filldraw [black](0,0) circle (1.2pt)
                     (1,0) circle (1.2pt)
                      (2,0) circle (1.2pt);
\filldraw [black](3,0) circle (1.2pt)
                     (4,0) circle (1.2pt)
                     (5,0) circle (1.2pt);
\filldraw [black](6,0) circle (1.2pt)
                     (7,0) circle (1.2pt)
                     (0,1) circle (1.2pt);
\draw[-](0,2)--(7,2);\draw[-](0,0)--(7,0);
\draw[-](0,2)--(0,0);\draw[-](7,0)--(7,2);
\draw  (0,2)..controls+(0.3,-0.3)and+(-0.1,0.5)..(1,0);
\draw  (1,0)..controls+(1,0.7)and+(-2.3,0.9)..(4,0);
\draw  (4,0)..controls+(1,0.7)and+(-2.3,0.9)..(7,0);
\draw  (7,0)..controls+(-0.3,0.3)and+(0.1,-0.5)..(6,2);
\draw  (0,2)..controls+(2.3,-0.9)and+(-1,-0.7)..(3,2);
\draw  (3,2)..controls+(2.3,-0.9)and+(-1,-0.7)..(6,2);
\draw (0,2.2)node { $z^2_{2}$};\draw (1,2.2)node { $y^1_{3}$};
\draw (3,2.2)node { $y^1_{2}$};\draw (4,2.2)node { $x^2_{3}$};
\draw (6,2.2)node { $x^2_{2}$};\draw (7,2.2)node { $z^1_{3}$};
\draw (7,-0.2)node { $z^1_{2}$};\draw (6,-0.2)node { $y^2_{3}$};
\draw (4,-0.2)node { $y^2_{2}$};\draw (3,-0.2)node { $x^1_{3}$};
\draw (1,-0.2)node { $x^1_{2}$};\draw (0,-0.2)node { $z^2_{3}$};
\draw (-0.2,1)node { $x^1_{1}$};\draw (7.25,1)node { $x^2_{1}$};
\draw (2,2.2)node { $z^2_{1}$};\draw (5,2.2)node { $y^1_{1}$};
\draw (2,-0.2)node { $y^2_{1}$};\draw (5,-0.2)node { $z^1_{1}$};
\end{tikzpicture}
\caption{The planar decomposition of $K_{3,3,3}\times K_2$}
\label{figure 10}
\end{center}
\end{figure}
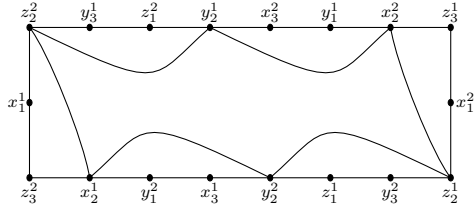

{\bf Case 2.} When $p\geq 1$.

The graph $K_{4p+3,4p+3}$ is a thickness $(p+2)$-minimal graph. Hobbs, Grossman \cite{HG68} and Bouwer, Broere \cite{BB68}
proved it independently, by giving two different planar subgraphs decompositions $\{H_1,\ldots, H_{p+2}\}$ of $K_{4p+3,4p+3}$ in which $H_{p+2}$ contains only one edge. Suppose that the two vertex parts of $K_{n,n}$ is $\{v_1,\ldots, v_n\}$  and $\{u_1,\ldots, u_n\}$, the only one edge in the $H_{p+2}$ is $v_{a}u_{b}$ (the edge is $v_{1}u_{1}$ in \cite{HG68}  and $v_{4p+3}u_{4p-1}$ in \cite{BB68}). For $1\leq i\leq p+2$, $H_i$ is a bipartite graph, so we also denote it by $H_i(V,U)$.

Because $K_{n,n,n}\times K_2=G^1 \cup G^2$ in which
$G^1=G(X^1,Y^2)\cup G(Y^1,Z^2)\cup G(Z^1,X^2)$ and $G^2=G(X^2,Y^1)\cup G(Y^2,Z^1)\cup G(Z^2,X^1)$, $|X^i|=|Y^i|=|Z^i|=n$ $(i=1,2)$, all the graphs $G(X^1,Y^2), G(Y^1,Z^2), G(Z^1,X^2), G(X^2,Y^1), G(Y^2,Z^1)$ and $G(Z^2,X^1)$ are isomorphic to $K_{n,n}$.

For graph $H_i(V,U)$ $(1\leq i\leq p+2)$, We replace the vertex set $V$ by $X^1$, $U$ by $Y^2$, i.e., for each $1\leq t\leq n$, replace the vertex $v_t$ by $x_t^1$, and $u_t$ by $y_t^2$, then we get a graph $ H_i(X^1,Y^2)$. Analogously, we can obtain graphs $H_i(Y^1,Z^2)$, $H_i(Z^1,X^2)$, $H_i(X^2,Y^1)$, $H_i(Y^2,Z^1)$ and $H_i(Z^2,X^1)$. For $1\leq i\leq p+2$, let
$$H^1_i=H_i(X^1,Y^2)\cup H_i(Y^1,Z^2)\cup H_i(Z^1,X^2),$$ then $H^1_i$ is a planar graph, because $H_i(X^1,Y^2),H_i(Y^1,Z^2) $, $H_i(Z^1,X^2)$ are disjoint with each other. For the same reason, the graph $$H^2_{i}=H_i(X^2,Y^1)\cup H_i(Y^2,Z^1)\cup H_i(Z^2,X^1)$$ is also a planar graph, $1\leq i\leq p+2$. And we have
$$K_{4p+3,4p+3,4p+3}\times K_2 =G^1\cup G^2=\mathop{\cup}\limits^{p+2}_{i=1}(H_i^1\cup H_i^2),$$ in which
$E(H_{p+2}^1)=\{x_a^1y_b^2, y_a^1z_b^2, z_a^1x_b^2\}$ and $E(H_{p+2}^2)=\{x_a^2y_b^1, y_a^2z_b^1, z_a^2x_b^1\}$.

In the following, we will add edges in $E(H_{p+2}^1)$ to graphs $H_1^2$ and $H_2^2$, add edges in $E(H_{p+2}^2)$ to graphs $H_1^1$ and $H_1^2$
to complete the proof. From Lemma \ref{le4}, there exists a planar embedding of $H_1(Y^1,Z^2)$ such that vertex $z_a^2$ on the boundary of its outer face, exists a planar embedding of $H_1(X^1,Y^2)$ such that $x_b^1$ on the boundary of its outer face. Then we join $z_a^2$ to $x_b^1$ without edge crossing. Suppose $y_b^1$ is on the boundary of inner face $F$ of the embedding of $H_1(Y^1,Z^2)$, put the embedding of $H_1(Z^1,X^2)$ in face $F$ with $x_a^2$ on the boundary of its outer face, then we join $x_a^2$ to $y_b^1$ without edge crossing. After adding both $x_a^2y_b^1$ and $z_a^2x_b^1$ to $H_{1}^1$ without edge crossing, we get a planar graph $\widetilde{H}_{1}^1$. With the same process, we  add both $x_a^1y_b^2$ and $z_a^1z_b^2$ to $H_{1}^2$ without edge crossing, then we get a planar graph $\widetilde{H}_{1}^2$.
From Lemma \ref{le4}, we can also add $y_a^2z_b^1$ to $H_{2}^1$, and $y_a^1z_b^2$ to $H_{2}^2$ without edge crossing, then we get planar graphs
$\widetilde{H}_{2}^1$ and $\widetilde{H}_{2}^2$ respectively.

Then we get a planar decomposition $$\{\widetilde{H}_{1}^1, \widetilde{H}_{2}^1,H_{3}^1,\ldots, H_{p+1}^1, \widetilde{H}_{1}^2, \widetilde{H}_{2}^2, H_{3}^2,\ldots, H_{p+1}^2\}$$ of $K_{4p+3,4p+3,4p+3}\times K_2$ with $2p+2$ subgraphs.

Summarizing Cases $1$ and $2$, the lemma follows.
\end{proof}

\begin{theorem} The thickness of the Kronecker product of $K_{n,n,n}$ and $K_2$ is
$$\theta(K_{n,n,n}\times K_2)=\big\lceil\frac{n+1}{2}\big\rceil.$$ \end{theorem}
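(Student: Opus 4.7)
The plan is to sandwich $\theta(K_{n,n,n}\times K_2)$ between matching bounds $\bigl\lceil\tfrac{n+1}{2}\bigr\rceil$ by combining Theorem \ref{2.2} with the planar-decomposition lemmas already established in this section.

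For the lower bound, observe that $K_{n,n,n}\times K_2$ is bipartite (since $K_2$ is), hence triangle-free, so Theorem \ref{2.2} applies with $|V(K_{n,n,n})|=3n$, $|E(K_{n,n,n})|=3n^2$, and $|V(K_2)|=|E(K_2)|=2$, giving
$$\theta(K_{n,n,n}\times K_2)\;\geq\;\Bigl\lceil\frac{3n^2}{6n-1}\Bigr\rceil.$$
The main routine step here is to verify the arithmetic identity $\bigl\lceil 3n^2/(6n-1)\bigr\rceil=\bigl\lceil(n+1)/2\bigr\rceil$; I would do this by writing $n=4p+s$ for $s\in\{0,1,2,3\}$ and computing $3n^2=(6n-1)q+r$ with $0\leq r<6n-1$ in each case, confirming that the quotient $q$ together with the (always nonzero) remainder yields ceiling exactly $2p+1$ when $s\in\{0,1\}$ and $2p+2$ when $s\in\{2,3\}$, which equals $\lceil(n+1)/2\rceil$ in all four residue classes.

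For the upper bound, I would split into the same four residue classes and cite the constructions already proved:
\begin{itemize}
\item If $n=4p$, Lemma \ref{le3} gives a planar decomposition with $2p+1=\lceil(n+1)/2\rceil$ subgraphs.
\item If $n=4p+1$, Lemma \ref{le5} gives $2p+1=\lceil(n+1)/2\rceil$ subgraphs.
\item If $n=4p+3$, Lemma \ref{le6} gives $2p+2=\lceil(n+1)/2\rceil$ subgraphs.
\item If $n=4p+2$, there is no dedicated lemma, but $K_{4p+2,4p+2,4p+2}\times K_2$ is a subgraph of $K_{4p+3,4p+3,4p+3}\times K_2$, so by Lemma \ref{le6} and monotonicity of thickness under subgraphs, $\theta(K_{4p+2,4p+2,4p+2}\times K_2)\leq 2p+2=\lceil(n+1)/2\rceil$.
\end{itemize}

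I do not expect a serious obstacle: the heavy lifting (the explicit planar decompositions and the face-manipulation arguments in Steps 1--8 of Lemma \ref{le5}) has already been done in the three lemmas. The only genuine work is the residue-class arithmetic verifying that the Euler-formula lower bound $\lceil 3n^2/(6n-1)\rceil$ collapses to $\lceil(n+1)/2\rceil$; this is where one must be a little careful with the ceiling, but it is a straightforward case split. Combining the matched bounds yields $\theta(K_{n,n,n}\times K_2)=\lceil(n+1)/2\rceil$, and as a remark the coincidence of the bounds in Theorem \ref{th3} when $n=4p+2$ (where $2\theta(K_{n,n})=2\lceil(n+2)/4\rceil=2p+2$) shows both bounds in that earlier theorem are sharp, as already foreshadowed in the text.
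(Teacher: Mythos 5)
Your proposal is correct and follows essentially the same route as the paper: the lower bound comes from Theorem \ref{2.2} (the paper computes it as $\lceil 6n^2/(12n-4)\rceil=\lceil(n+1)/2\rceil$), and the upper bound combines Lemmas \ref{le3}, \ref{le5} and \ref{le6} with the observation that $K_{4p+2,4p+2,4p+2}\times K_2$ is a subgraph of $K_{4p+3,4p+3,4p+3}\times K_2$. Two small slips do not affect the conclusion: $|E(K_2)|=1$, not $2$, and the denominator supplied by Theorem \ref{2.2} is $|V(G)||V(H)|-2=6n-2$ rather than $6n-1$, though the ceiling identity $\lceil 3n^2/(6n-2)\rceil=\lceil(n+1)/2\rceil$ you need still holds.
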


\begin{proof}
Because of $\ E(K_{n,n,n}\times K_2)=6n^2$ and  $\ V(K_{n,n,n}\times K_2)=6n$, from Theorem \ref{2.2}, we have
$$\theta(K_{n,n,n}\times K_2)\geq\big\lceil\frac{6n^2}{2(6n)-4}\big\rceil=\big\lceil{\frac{n}{2}+\frac{n}{6n-2}}\big\rceil=\big\lceil\frac{n+1}{2}\big\rceil.\eqno(3)$$

When $n=4p+2$, because $K_{4p+2,4p+2,4p+2}\times K_2$ is a subgraph of $K_{4p+3,4p+3,4p+3}\times K_2$, we have $\theta(K_{4p+2,4p+2,4p+2}\times K_2)\leq \theta(K_{4p+3,4p+3,4p+3}\times K_2).$ Combining this fact with Lemmas \ref{le3} ,\ref{le5} and \ref{le6}, we have
$$\theta(K_{n,n,n}\times K_2)\leq\big\lceil\frac{n+1}{2}\big\rceil.\eqno(4)$$

From inequalities $(3)$ and $(4)$,  the theorem is obtained.\end{proof}

\bibliography{bibfile}

\end{document}